\newcommand\blfootnote[1]{%
  \begingroup
  \renewcommand\thefootnote{}\footnote{#1}%
  \addtocounter{footnote}{-1}%
  \endgroup
}
\newtheorem{theorem}{Theorem}[section]
\newtheorem{lemma}[theorem]{Lemma}
\newtheorem{proposition}[theorem]{Proposition}
\newtheorem{corollary}[theorem]{Corollary}
\theoremstyle{definition}
\newtheorem{definition}[theorem]{Definition}
\newtheorem{example}[theorem]{Example}
\theoremstyle{remark}
\numberwithin{equation}{section}
\begin{document}

\title[]{Contraction criteria for Brownian filtrations samplings}

\author{R\'{e}mi  Lassalle}
\address{Universit\'{e} Paris-Dauphine, PSL, Place du Mar\'echal De Lattre De Tassigny, 75775 Paris Cedex 16, France}
\email{lassalle@ceremade.dauphine.fr}

\vspace{0.5cm}

\begin{abstract}
This paper investigates the problem to determine whether a given stochastic process generates a sampled Brownian filtration. A fairly general sufficient condition is obtained by applying the Frank H. Clarke contraction criteria to  a functional whose construction relies on the quasi-invariance properties of a sampled Wiener measure. The latter are investigated from the precise analytic structure underlying this sampled Brownian motion. In particular, the Cameron-Martin space is identified from usual Lebesgue integrals over a Guseinov measure. As an application we obtain sufficient conditions for the  existence of a solution to a class of not necessarily Markovian stochastic differential equations driven by a sampled Brownian motion. By taking a  sampling times set which coincides with the unit interval, the results apply to continuous time models.
\end{abstract}

\maketitle

\noindent

\noindent 
\blfootnote{\textbf{Keywords :}  Stochastic analysis;  Clarke's contraction criteria; Brownian motion;   Stochastic differential equations ;  Yershov problem; Time scales\ ;}
\blfootnote{\textbf{Mathematics Subject Classification :} Primary 60H10 ;  Secondary : ????, ????}

\section{Introduction}
Nonlinear and functional analysis adapted for measured spaces endowed with an increasing collection of $\sigma-$ fields is certainly an essential thread of stochastic analysis. 
In this paper, we investigate fixed point methods applied to $\sigma-$ fields coincidence problems, within the scope of stochastic analysis with time scales samplings.  In particular, we show how to apply the sharp Clarke fixed point theorem (\cite{CLARKE1}) of nonlinear analysis to obtain sufficient conditions for a given continuous stochastic process $(X_t)_{t\in \mathbb{T}}$,  which stands for the \textit{data} of our problem,  to be a generating sampled Brownian motion  on a completed naturally filtered canonical path space, which is endowed  with a Borel probability measure $\nu$,  which stands for the \textit{unknown} of the problem; here, as it is usual, a filtration means an increasing collection of $\sigma-$ fields, while the time scales $\mathbb{T}$ of interest here include the unit interval $\mathbb{T}=[0,1]$ as a particular case. One of the motivations to find such probabilities, in this latter case, is that the process $(X_t)_{t\in[0,1]}$ has not only the predictable representation property, but also the chaotic representation property (\cite{MMYOR}) on the canonical space endowed with $\nu$.

 To briefly motivate the framework, recall that within the perspective of stochastic calculus, continuous time models as those indexed by $[0,1]$ are well known to yield a technical gap with respect to usual discrete time models indexed by subsets of non-negative integers. This gap is filled by the introduction of considerably much technical tools as predictable processes and of much refined conditioning strategies (see \cite{DM}, \cite{D3}, \cite{I-W}, \cite{JACOD}). Those features are particularly emphasized by the problematic of existence of strong solutions to continuous time stochastic differential equations, and from the so-called first Tsirelson counterexample (\cite{LEGALLYOR}, \cite{Tsi}). Starting from a strictly decreasing sequence \begin{equation} \label{123L1} \{ t_n : n\in \mathbb{Z}_{-} \}\subset [0,1],\end{equation} such that  \begin{equation} \label{123L2} \lim_{n\to -\infty} t_n =0, \end{equation} the latter provides an example of a Borel probability measure $\nu_{T}$ $\in$ $M_1$ $(C_{[0,1]})$, equivalent to the classical Wiener measure $\mu_W$ (\cite{WIENER1}),  with the predictable representation property with respect a Brownian motion $(B_t^{\nu_{T}})$ of the augmented canonical filtration $(\mathcal{F}_t^{\nu_{T}})$, which is however not generated by this Brownian motion $(B_t^{\nu_{T}})$ on $(C_{[0,1]}, \mathcal{B}(C_{[0,1]})^{\nu_T}, \nu_T)$ (see \cite{YOR2}); the augmented filtration $(\mathcal{G}_t^{B^{\nu_{T}}})$ generated by the Brownian motion $(B_t^{\nu_{T}})$ is strictly smaller than  $(\mathcal{F}_t^{\nu_{T}})$ which we denote by $$(\mathcal{G}_t^{B^{\nu_{T}}}) \subsetneq (\mathcal{F}_t^{\nu_{T}}).$$ Within the same line, the Dubins-Feldman-Smorodinsky-Tsirelson (\cite{DUBINS}) counter $-$ example, which answers by the negative to a question of \cite{STROOCK3},  provides a Borel probability measure $\nu_{D}\in M_1(C_{[0,1]})$ equivalent to the Wiener measure, which still have the predictable representation property with respect to a $(\mathcal{F}_t^{\nu_{D}})-$ canonical Brownian motion $(B_t^{\nu_{D}})$, and further enjoys stronger specificities as $\nu_T$. In this latter case, which is based on the Vershik theory (\cite{VERSHIK}), not only $(B_t^{\nu_{D}})$ does not generate the augmented canonical filtration, but also the inclusion   $$(\mathcal{G}_t^{B}) \subsetneq (\mathcal{F}_t^{\nu_D})$$
is strict for all $(\mathcal{F}_t^{\nu_{D}})$ Brownian motion $(B_t)$ on $(C_{[0,1]}, \mathcal{B}(C_{[0,1]})^{\nu_D}, \nu_D).$

 This counterexample has been the subject of several enlightening (for instance see \cite{EMERY6}, \cite{SCHACHERMAYER}), and as it is pointed out in \cite{DUBINS} (see also \cite{YOR2}), in the main lines the argument turns out to the existence of a strictly decreasing sequence satisfying~(\ref{123L1}) and~(\ref{123L2}) which is such that $(\mathcal{F}_{t_k}^{\nu_D})_{k\in \mathbb{Z}_{-}}$ is not generated by a family of independent random variables  $(X_k)_{k\in \mathbb{Z}_{-}}$ with diffuse laws. At the inverse,  those sharp negative results and counterexamples strongly motivate for the search of positive results to the problem investigated in the present paper, within a framework which somehow interpolates between the discrete and the continuous time models.  Seeking for a framework which provides a point of view allowing as well finite time models, strictly decreasing sequences of the form~(\ref{123L1}) and~(\ref{123L2}), and the continuous framework $[0,1]$, while keeping sufficient topological properties to perform stochastic analysis, we will work with time scales $\mathbb{T}$. Recall that those objects appear as key tools in the calculus of measure chains introduced by Stefan Hilger (\cite{HILGER}) to provide a unifying point of view on discrete and continuous time models of analysis (see \cite{HILGER}, see also \cite{Bohner}). Ever since, the tool box of time scales analysis  has considerably increased, and we recall that its very basic object  is a closed subset $\mathbb{T}$ of $\mathbb{R}$ packed together with a specific classification of the elements of $\mathbb{T}$,  which will be recalled below.  Among several works involved in stochastic calculus with time scales, we particularly mention \cite{BOHNER3}, \cite{GROW}, \cite{LUNGAN},  \cite{SANYAL}, and we particularly emphasize \cite{BOHNER2}. The latter, which considers $\mathbb{T}$ sampled Wiener processes, notably proposes to construct stochastic integrals from specific extended functions, and further consider solutions to stochastic differential equations driven by a sampled Brownian motion with Markovian drift terms, under suitable Lipschitz conditions, within the specific construction of \cite{BOHNER2}. However, to obtain the required analytic structure of the sampled Brownian motion, which seems necessary for our specific purposes, we rather follow a usual route in stochastic analysis, which goes through abstract Wiener spaces theory, and starts from the Cameron-Martin space. 
 To construct the latter, we will take advantage of the measure $\lambda_{\Delta}$ of \cite{BOHNER2}, interpreted as a Borel probability measure of Guseinov's type \cite{GUSEINOV}, which seems to be much accessible to probabilists; recall that when $\mathbb{T}=[0,1]$, $\lambda_{\Delta}$ boils down to the standard Lebesgue measure $\lambda$.   At this stage, it seems necessary to point out that, for the sake of consistency with the classical continuous time stochastic analysis framework and for the sake of clarity, we will make specific assumptions and slightly adapt the notation, while keeping a consistency with \cite{BOHNER2}, whose integrals are shown here to provide the same values as suitable Paley-Wiener integrals on usual indicator functions.  For instance, within the whole paper, we will work with  a time scale $\mathbb{T}$ which satisfies $$\{0,1\}\subset \mathbb{T}\subset [0,1].$$ Under the fundamental time scales classification recalled below, whenever $\mathbb{T}$ contains a sequence which satisfies~(\ref{123L1}) and~(\ref{123L2}), $0$ is an element of the subset of right-dense points $\mathbb{T}_{rd}$ of $\mathbb{T}$.  Therefore, when handled as \textit{random functions}, our continuous stochastic processes will be random elements taking their values in a suitable space $C_{\mathbb{T}}$ of real valued continuous functions defined on $\mathbb{T}$. The latter is still endowed with the norm of uniform convergence thus yielding a Borel $\sigma-$field $\mathcal{B}(C_{\mathbb{T}})$, while the law of $X$, as a random function, will be a Borel probability measure on the possibly completed measurable space $(C_{\mathbb{T}}, \mathcal{B}(C_{\mathbb{T}}))$. Given a Borel probability measure $\nu\in M_1(C_{\mathbb{T}})$ on $(C_{\mathbb{T}}, \mathcal{B}(C_{\mathbb{T}}))$,  $(\mathcal{F}_t^\nu)$ denotes the augmentation of the filtration generated by the evaluation process $(W_t)_{t\in \mathbb{T}}$. More accurately, within this specific paper, as we are essentially interested in sampled brownian filtrations, we set $$\mathcal{F}_t^\nu:=\sigma(W_s : s\in [0,t]_{\mathbb{T}})^\nu, \forall t\in \mathbb{T},$$ which at each $t\in \mathbb{T}$ denotes the completion with respect to the whole $\nu$ negligible sets of the $\sigma-$ field generated by the $\{W_s,  s\in [0,t]_{\mathbb{T}} \}$, where $W_s : \omega\in C_{\mathbb{T}}\to \omega(s)\in \mathbb{R}$, denotes the evaluation function for any $s\in \mathbb{T}$, and where $[0,t]_{\mathbb{T}}:=[0,t]\cap \mathbb{T}$.

 Within the abstract form of a \textit{problem of measure}, problems as those stated in the first paragraph have been addressed notably in \cite{ERSHOVPHD}, \cite{ERSHOV2}, where general results of theory of measure, as those of \cite{MARCZEWSKI} and of \cite{SION},  have been showed to yield efficiently solutions in much general context to less specified versions of this problem, called Yershov problems : within our specific context and datas, this problem amounts to find $\nu\in M_1(C_{\mathbb{T}})$ such that $(X_t)_{t\in \mathbb{T}}$ is a sampled Brownian motion with respect to its own filtration. We refer to the well documented monography of V.I. Bogachev \cite{BOGACHEV} for further references, where several works of Yershov have been put into light much recently under an abstract form. In particular, our unknown probability measure  $\nu\in M_1(C_{\mathbb{T}})$ induces an \textit{extremal solution} to a corresponding Yershov problem, that is within our context,  this solution also satisfies $\mathcal{G}_1^X= \mathcal{F}_1^\nu;$ $\mathcal{G}_1^X= \sigma(X_s : s\in [0,1]_{\mathbb{T}})^\nu$. However, despite the efficiency of applications of general principles of measure theory in stochastic frameworks to obtain sufficient conditions for the existence of a probability $\nu$ which turns the process $(X_t)$ into a Brownian motion with respect to its own filtration, existing works following this line rather emphasize the importance of tools of functional analysis and of nonlinear analysis to obtain much precise results and sufficient conditions of existence to extremal solutions, and  to non-anticipating solutions which further satisfy the much technical coincidence of filtrations  \begin{equation} \label{filtrcoinc12} \mathcal{G}_t^X =\mathcal{F}_t^\nu, \ \forall t\in \mathbb{T}.\end{equation} In particular, in continuous time frameworks, several works of V.A. Benes as \cite{INNOVATION2}, \cite{INNOVATIONB2}, which investigate problems related to the Yershov problem in much analytic frameworks, emphasize the importance of tools of analysis to solve coincidence of $
 \sigma-$ fields problems, which furthermore allow to obtain extremal solutions to the Yershov problem; the latter also points out connections with the Zvonkin theorem  (\cite{ZVONKIN}) and applications to the innovation conjecture (\cite{FROST},\cite{INNOVATION3}), a rigorous exposition of which is provided by the nowadays classical paper of the french mathematician P.A. Meyer \cite{Meyer2}.  On the other hand,  we notably emphasize works of M. Yor \cite{YOR0},  where a theorem of functional analysis (\cite{DOUGLAS}) 
  plays the first roles to obtain a filtrations coincidence, and also works of D. F. Allinger  who proposed to investigate much  rigorously related problems of coincidence of filtrations as~(\ref{filtrcoinc12}) with nonlinear operators, in  view of applications to the innovation problem \cite{INNOVATION1}, \cite{INNOVATIONALLINGER2}, \cite{INNOVATION3ALI} ;  those latter aspects somehow appear as a shadow, while their main technical points and rigorous hypothesis remain hidden behind the curtain in several other works.
 Thanks to the development of tools 
 of \textit{stochastic analysis} (\cite{MALLIAVIN}) and particularly of \textit{Malliavin calculus} (see \cite{CRUZEIROMALLIAVIN09}, \cite{MALLIAVIN7}, \cite{MALLIAVIN}, \cite{NOURDIN}, \cite{USTUNELBOOKZ}), approaches based on functional analysis and on nonlinear analysis methods to solve distinct 
 problems which, as a byproduct,  can be used to obtain extremal solutions to Yershov problems  on $\mathbb{T}=[0,1]$, and some solutions which satisfy~(\ref{filtrcoinc12}), have 
 been considerably developed ever since. As far as fixed point methods are involved, we particularly mention a sequence of works of notably D.Feyel, A.S. \"{U}st\"{u}nel, and M. Zakai,  such as~(\cite{FUZ}) and (\cite{USTUZAKAIINV1}) which also review several related approaches; for other related problems see also \cite{JFALAS}, and the references therein.  However, contrary to those among those previous works whose proofs are built from a fixed point theorems, as the Leray-Schauder theorem (see \cite{FUZ}), our conditions will not be stated pathwise, but will be global integral conditions. The reason for this approach is to obtain rigorously the whole coincidence of filtrations~(\ref{filtrcoinc12}), without neither adding further conditions to ensure a specific uniqueness of solutions to the corresponding Yershov problem, nor a localization of the constraints (for instance, see \cite{INNOVATIONALLINGER2} with pathwise approaches, \cite{LOCINVSTO} with integral approaches).   Following this line, we obtain Theorem~\ref{theorem2}, which is our main result. Under precise hypothesis, the latter provides a sufficient condition based on the fixed point theorem of the analyst Frank H. Clarke (\cite{CLARKE1}), for $(X_t)_{t\in\mathbb{T}}$ to be a generating sampled Brownian motion with respect to some Borel probability measure $\nu\in M_1(C_{\mathbb{T}})$ whose existence is obtained;    the contraction criteria of \cite{CLARKE1}  is also one of the well known applications of Ekeland's variational principle (\cite{EKELANDVAR}) has shown in \cite{EKELAND2}, where a clear statement of this criteria is also recalled.  As an application, we provide sufficient conditions of existence for a strong solution to not necessarily Markovian stochastic differential equations driven by a sampled Brownian motion $(B_t)_{t\in \mathbb{T}}$,  of the form  \begin{equation}X_t := B_t + \int_{[0,t)_{\mathbb{T}}} \beta_s(X) \lambda_\Delta(ds), \ \forall t\in \mathbb{T},\label{edsINTRO1} \end{equation} stemming from an application of the Frank H. Clarke fixed point theorem. These solutions are obtained by a rigorous method adapted to our modern context, which is based on a pioneering vision of the french mathematician Robert Fortet in \cite{Fortet2}. Written in the first half of the XXth century, this paper which is well known to be fundamental in the study of absolutely continuous transport of laws of stochastic processes (\cite{BRUYOR}), proposed notably a \textit{symbolic reciprocal formula} to handle an equation with randomness from a fixed point method.

 The structure of the paper is the following.  Section~\ref{P} introduces the notation of the paper and provides a brief recall on the tools required subsequently which are essentially of two distinct kinds, from time scales and from stochastic analysis. As reading this paper does not require to have prior knowledge on time scales analysis,  we provide a recall on those specific tools necessary to follow the proofs in a way which is much suitable to probabilists. The  integrals of Hilger's time scales analysis, which seem necessary for our purposes, play a key r\^{o}le to obtain the analytic structure of the sampled Brownian motion from the Gross construction of abstract Wiener spaces (\cite{gr65}).  The reason to identify those features is that, within the specific approach which we unroll, the function whose fixed points will be of interests require sharp results on the absolute continuity of specific transports of measure to be well defined. Therefore, we require not only the sampled Brownian motion process, but also its whole analytic structure, including its Cameron-Martin space which is suitably identified by using integrals over  Guseinov's type measures  $\lambda_{\Delta}$ as recalled together with the Paley-Wiener integrals in Section~\ref{ANALYTICBROWNIAN}, whose values on usual specific indicators are easily checked to be consistent with those of the construction of \cite{BOHNER2}. Once the desired Cameron-Martin space of the sampled Brownian motion has been identified, we infer the transformations of interest, notably those which leave the sampled Wiener measure quasi-invariant.  This task is achieved in section~\ref{CML} where, after a recall of fundamental results on the quasi-invariance of the sampled Wiener measure, Lemma~\ref{Lemmabs}, which is a Kailath-Zakai type lemma on absolute continuity, is obtained. From this lemma, in Section~\ref{SCC} we state our main result Theorem~\ref{theorem2}. The latter, which is based on the Clarke fixed point theorem, provides a sufficient condition for a continuous stochastic process $(X_t)_{t\in \mathbb{T}}$ defined on $C_{\mathbb{T}}$ to be a generating sampled Brownian motion on the naturally filtered probability space $(C_{\mathbb{T}},\mathcal{B}(C_{\mathbb{T}})^\nu, \nu)$, for some Borel probability measure $\nu \in M_1(C_{\mathbb{T}})$.  Finally,   section \ref{Fortetsection} states  Corollary~\ref{sdecorollary}, which provides sufficient conditions for the existence of a strong solution to~(\ref{edsINTRO1}) from the Clarke fixed point theorem. Explicit examples of solutions to those equations, with a not necessarily neither Markovian nor delayed drift term, and which generate sampled Brownian filtrations are provided, notably  with the Cantor set $K_3$ which is a typical time scale.

\section{Preliminaries and notation}
\label{P}

\subsection{Recall and notation of time-scales $\mathbb{T}$}

Time scales theory has been introduced by Stefan Hilger (see \cite{HILGER}), we refer to \cite{Bohner} for a clear survey on part of this dynamic domain whose literature seems to grow increasingly.

In the whole paper $\mathbb{T}$ denotes a closed subset of $\mathbb{R}$, which is further assumed to satisfy $\{0,1\} \subset \mathbb{T} \subset [0,1]$; the latter hypothesis which is quite specific to our stochastic framework considerably clarify the notation, while keeping a viewpoint sufficiently general to obtain interesting applications. Such set $\mathbb{T}$,  packed together with the specific classification recalled in Definition~\ref{Hilgerclasse}, is called a time scale, and is one of the main tools and symbols of the time scales analysis, since the seminal works of Hilger (\cite{HILGER}).  The latter provides an unifying point of view on discrete and continuous time models of analysis.  Within this specific framework,  we first recall basic aspects and tools of time scales analysis,  sufficient for our purposes. However, due to our requirement to handle the whole analytic structure of the $\mathbb{T}-$ sampled Brownian motion, which is often hidden in probability, notably the associated Cameron-Martin space,  we will have to slightly adapt the notation.  In the whole paper,  we adopt the notation  $I_\mathbb{T}:= I\cap \mathbb{T}$ from \cite{BOHNERNOTATION}, when $I\subset [0,1]$ denotes an interval. 

We now recall how to obtain the measure $\lambda_{\Delta}$ of \cite{BOHNER2} within the specific circumstances encountered in this paper. The latter is interpreted as a Guseinov measure (\cite{GUSEINOV}), from the Rynne (\cite{Rynne}) approach, which is to obtain those latter measures as pushforwards of the standard Lebesgue measure by suitable measurable functions. First,  
the set $\mathbb{T}$ is endowed with its usual induced topology as a subset of $\mathbb{R}$, while $\mathcal{B}(\mathbb{T})$ denotes the associated Borel $\sigma-$field on $\mathbb{T}$, which coincides with the trace of $\mathcal{B}([0,1])$ on $\mathbb{T}$; in particular, $\mathcal{B}([0,1])$ denotes this $\sigma-$ field when $\mathbb{T}=[0,1]$. The Guseinov measure $\lambda_{\Delta}:=\bar{\rho}_\star \lambda$ of interest is a Borel probability measure on the measurable space $(\mathbb{T}, \mathcal{B}(\mathbb{T}))$, which is obtained as the direct image (pushforward) $\bar{\rho}_\star \lambda$ of the classical Lebesgue measure $\lambda$ on $([0,1],\mathcal{B}([0,1]))$, by the $\mathcal{B}([0,1]) /\mathcal{B}(\mathbb{T})$  Borel measurable function $\bar{\rho} : [0,1] \to \mathbb{T}$ which is given by $$\bar{\rho}(t):= \begin{cases}  \sup(\{ u < t : \ u\in \mathbb{T}\}) \  if \ t\in ]0,1] \\ 0 \   if  \ t = 0 \end{cases},$$  so that it satisfies $$\lambda_{\Delta}(A) := \bar{\rho}_\star \lambda(A) = \lambda(\bar{\rho}^{-1}(A)):=\lambda(\{s\in [0,1] : \bar{\rho}(s)\in A\}), \ \forall A\in \mathcal{B}(\mathbb{T}),$$ while $\bar{\rho}$ takes values in $\mathbb{T}$ since the latter is a closed set. Notice that we used a strict inequality in the definition of $\bar{\rho}$, and that due to the specificities of the framework, we have also denoted $\bar{\rho}(0)=0$. The restriction $\rho:=\bar{\rho}|_{\mathbb{T}}$ of $\bar{\rho}$ to $\mathbb{T}$ is the  so- called the \textit{backward jump operator} of time scales analysis.

Another function of interest is given by  $\bar{\sigma} : [0,1] \to \mathbb{T}$, the function defined by $$\bar{\sigma}(t) := \begin{cases}  \inf(\{ u> t : u\in 
\mathbb{T}\}) \  if \ t\in [0,1[  \\ 1 \   if  \ t = 1 \end{cases}.$$ The restriction $\bar{\sigma}|_{\mathbb{T}}$ of this function to $\mathbb{T}$ is the so-called \textit{forward jump operator}, which is used here to compute explicitly 
values of $\lambda_{\Delta}(A)$ for specific sets of the form $A:=I\cap \mathbb{T}$, $I$ denoting an interval of $[0,1]$; notice that for the sake of consistency with usual notations of 
stochastic analysis in the case where we take $\mathbb{T}:=[0,1]$, we have set $\bar{\sigma}(1)=1$. From the definitions together with the Hilger's time scales classification recalled below, the functions $\bar{\rho}$ and $\bar{\sigma}$ are easily checked to be related by the formula\begin{equation} \bar{\rho}^{-1}([0,t]_{\mathbb{T}}) = [0,\bar{\sigma}(t)], \ \forall t\in \mathbb{T}, \label{inversima} \end{equation}  which determines the values of $\lambda_{\Delta}$, where $\bar{\rho}^{-1}([0,t]_{\mathbb{T}}) :=\{s \in [0,1] : \bar{\rho}(s)\in [0,t]_{\mathbb{T}}\}$, denotes an inverse image for each $t\in \mathbb{T}$.  Aside the backward jump operator, and the forward jump operator, another usual tool in time scales analysis is the so-called \textit{graininess} of $\mathbb{T}$, which we denote here by $\bar{\mu}_g$ to distinguish it from the notation of the measures which we handle. Recall that, within our specific hypothesis on $\mathbb{T}$,  the graininess of $\mathbb{T}$ is the function $\bar{\mu}_g : \mathbb{T} \to \mathbb{R}$ defined by $$\bar{\mu}_g(t) := \bar{\sigma}(t)- t, \  \forall t\in \mathbb{T}.$$ The graininess and the function $\bar{\sigma}$ enable to identify those values of the Guseinov measure $\lambda_{\Delta}$ necessary to follow the proofs of this paper   \begin{equation} \label{guseinovvalues} \lambda_{\Delta}(I_{\mathbb{T}})=  \begin{cases} \bar{\mu}_{g}(t) \ if \ I_{\mathbb{T}}=\{t\}, \ \ t\in \mathbb{T} \\ \bar{\sigma}(t)- s \ if  \ I_{\mathbb{T}}= [s,t]_{\mathbb{T}},  \ s,t\in \mathbb{T}, \ s < t ,  \end{cases}\end{equation}  which follows from the definition together with~(\ref{inversima}); values of other scaled intervals $I_{\mathbb{T}}$, where $I$ denotes an interval of $[0,1]$, easily follow from~(\ref{guseinovvalues}) together with the $\sigma-$ additivity of $\lambda_{\Delta}$. Those operators are also of importance to make the usual distinctions of time scales, which we recall here within our specific hypothesis :
\begin{definition} (Hilger's time scales classification.) \label{Hilgerclasse}
Assuming that $\mathbb{T}$ is a time scale such that $\{0,1\}\subset \mathbb{T} \subset[0,1]$, a point $t\in \mathbb{T}$ is said to be
\begin{itemize}
\item right-scattered, if $\bar{\mu}_g(t)>0$, and $t\in \mathbb{T}\setminus \{1\}$
\item right-dense, if $\bar{\mu}_g(t)=0$ and $t\in \mathbb{T}\setminus \{1\}$
\item left-scattered, if $\rho(t)<t$ and  $t\in \mathbb{T}\setminus \{0\}$
\item left-dense if $\rho(t)=t$ and  $t\in \mathbb{T}\setminus \{0\}$
\end{itemize}
Subsequently, $\mathbb{T}_{rs}$ (resp. $\mathbb{T}_{ls}$) denotes the at-most countable subset of right-scattered (resp. left-scattered) points of $\mathbb{T}$.
\end{definition}
Notice that the subset $\mathbb{T}_{ls}\subset[0,1]$ of left-scattered points is a null set for the standard Lebesgue measure $\lambda$ on $([0,1],\mathcal{B}([0,1]))$, which has no atom, while from the definitions $\bar{\rho}(t)=t$ holds, $\forall t\in \mathbb{T}\setminus \mathbb{T}_{ls}$. This easily entails $$\int_{\mathbb{T}} f(t)\lambda_{\Delta}(dt) = \int_{[0,1]} f(\sup([0,t]_{\mathbb{T}})) \lambda(dt), \ \forall f\in \mathcal{L}^1(\lambda_{\Delta}),$$ which allows us to identify the measure of \cite{BOHNER2} within our specific hypothesis as it has been announced above, while using the function $\bar{\rho}$ to define $\lambda_{\Delta}$ is justified by the nice properties~(\ref{inversima}) of its inverse images of scaled intervals; following the usual notation $\mathcal{L}^1(\lambda_{\Delta})$ denotes the set of $\mathcal{B}(\mathbb{T})/\mathcal{B}(\mathbb{R})$ measurable functions $f :\mathbb{T}\to \mathbb{R}$ such that $\int_{\mathbb{T}}|f(t)|\lambda_{\Delta}(dt)<+\infty$, while as it was recalled, $\lambda_{\Delta}$ is a Borel measure on the measurable space $(\mathbb{T}, \mathcal{B}(\mathbb{T}))$.  A key  tool to handle $\lambda_{\Delta}$ below is the Lebesgue decomposition formula for the Guseinov measure $\lambda_{\Delta}$. The latter may be found for instance in the Proposition 2.1 of \cite{Agarwal}, which under our hypothesis and notation holds under the form  
\begin{equation} \lambda_{\Delta} = \lambda|_{\mathcal{B}(\mathbb{T})} + \sum_{t\in \mathbb{T}_{rs}} \bar{\mu}_g(t) \delta^{Dirac}_t, \label{Hilgerdecomp} \end{equation}
$\lambda|_{\mathcal{B}(\mathbb{T})}$ denoting the restriction of the usual Lebesgue measure on $([0,1],$ $\mathcal{B}([0,1]))$ to the sub sigma-field $\mathcal{B}(\mathbb{T})$, $\delta^{Dirac}_t$ denoting the Dirac measure concentrated on  an element $t$ of the at-most countable set $\mathbb{T}_{rs}$, $\bar{\mu}_g$ denoting the graininess of $\mathbb{T}$. To get a straightforward intuition of the origin of~(\ref{Hilgerdecomp}), from the $\sigma-$ additivity together with the definition of  $\lambda_{\Delta}$, it is enough to notice that $\mathbb{T}\cap \bar{\rho}^{-1}(\mathbb{T}_{rs})$ is at-most countable and therefore $\lambda$ negligible, that $\bar{\rho}^{-1}(\mathbb{T})\cap \mathbb{T} =\mathbb{T}$, while $\bar{\rho}(t) \in \mathbb{T}\setminus \mathbb{T}_{rs}$ implies $\bar{\rho}(t)=t$, for all $t\in [0,1]$, which can be checked readily by contradiction. 
Subsequently the previous decomposition formula~(\ref{Hilgerdecomp}) may be used to identify integrals over  a Guseinov measure. Given $f\in \mathcal{L}^1(\lambda_{\Delta})$ recall that the time scales integral of $f$ might be also denoted by $$\int_{\mathbb{T}} f(t) \Delta t := \int_{\mathbb{T}} f(t) \lambda_\Delta(dt), \ \forall f\in \mathcal{L}^1(\lambda_{\Delta}) .$$ From the previous decomposition formula, whenever $\mathbb{T}=[0,1]$ it boils down to an integral over the standard Lebesgue measure. We refer to \cite{GUSEINOV} and to the references therein for the relationship of such integrals with other much specific integrals of time scales analysis, as time scaled Riemann integrals, and with the Hilger derivative.

\subsection{The sampled Wiener measure $\mu_{\mathbb{T}}$}

The measures which we consider here will be mostly Borel probabilities on the separable Banach space $C_{\mathbb{T}}$ of the real valued continuous functions $\omega : \mathbb{T}\to \mathbb{R}$ defined on $\mathbb{T}$ vanishing at $0$ (i.e. $\omega(0)=0$), which is 
endowed with the norm of uniform convergence $\|.\|_{\infty}$, which is given by $\|\omega\|_{\mathbb{T}}:= \sup_{t\in\mathbb{T}}|\omega(t)|$, for each $\omega\in C_{\mathbb{T}}$; subsequently $\mathcal{B}(C_{\mathbb{T}})$ denotes the associated Borel $\sigma-$ fields on $C_{\mathbb{T}}$, while $M_1(C_
{\Delta})$ denotes the set of Borel probability measures on $C_{\mathbb{T}}$. Similarly, we denote by $C_{[0,1]}$ the set of continuous real valued functions  defined on the unit interval which vanish at $0$; it coincides with $C_{\mathbb{T}}$ when $\mathbb{T}=[0,1]$. In this paper, whenever $\mathbb{T}$ denotes a time scale as above,  to distinguish it from the so-called classical Wiener measure on $C_{[0,1]}$, we refer to the Wiener measure (\cite{WIENER1}, see also \cite{ITOMACKEAN}) on the space of functions $(C_{\mathbb{T}},\mathcal{B}(C_{\mathbb{T}}))$, which we denote by $\mu_{\mathbb{T}}\in M_1(C_{\mathbb{T}})$, as the \textit{sampled Wiener measure}. That is, $\mu_{\mathbb{T}}$ denotes the unique Borel probability measure on the measurable space $(C_{\mathbb{T}}, \mathcal{B}(C_{\mathbb{T}}))$ which is such that $\forall n\in [1, Card(\mathbb{T})-1]$ if $\mathbb{T}$ is a finite set, $\forall n\in \mathbb{N}$ otherwise, and for all $\{t_i : i \in \{0,...,n\}\}\subset \mathbb{T}$ such that $0=t_0<t_1<...<t_n$, setting $x_0:=0$,   the identity  $$\int_{C_{\mathbb{T}}} f(\omega(t_1),...,\omega(t_n))\mu_{\mathbb{T}}(d\omega) = \int_{\mathbb{R}^n} f(x_1,...,x_n) \frac{\exp\left(- \sum_{j=1}^n \frac{(x_{j}-x_{j-1})^2}{2(t_j-t_{j-1})}\right)}{(2\pi)^{\frac{n}{2}}(\Pi_{l=1}^n(t_l-t_{l-1}))^{\frac{1}{2}} } dx_1...dx_n,$$  holds $\forall f\in C_b(\mathbb{R}^n)$, continuous and bounded real valued function defined on $\mathbb{R}^n$. 
As it is well known, while the classical Wiener measure $\mu_{W}$ on $C_{[0,1]}$ is the law of the standard Brownian motion $(B_t)_{t\in [0,1]}$ starting from $0$, handled as a random function, the sampled Wiener measure $\mu_{\mathbb{T}}$ denotes similarly the law of the sampled Brownian motion $(B_t)_{t\in\mathbb{T}}$ handled as a random element of $C_{\mathbb{T}}$. Although the study of sampled stochastic processes constituted by the random variables $(B_t)_{t\in J}$ is usual, notably when $J\subset [0,1]$ is a finite set, taking $J$ to be a closed set brings enough structure to obtain astonishingly accurate results from the Hilger time scales tool box when $\mathbb{T}$ is neither an interval, nor a finite set, as it can be seen from early works as \cite{BOHNER2} and \cite{SANYAL}, which seems to have started investigations of those specific time scales tools within this stochastic context. Subsequently, we still denote by $\mu_{\mathbb{T}}$ the unique extension of this measure to the completed probability space $(C_{\mathbb{T}}, \mathcal{B}(C_{\mathbb{T}})^{\mu_{\mathbb{T}}})$.  Whenever $\mathbb{T}=[0,1]$, $\mu_{\mathbb{T}}$ coincide with the classical Wiener measure $\mu_{W}\in M_1(C_{[0,1]})$  (\cite{WIENER1}); notice that, by following \cite{BOHNER2}, the Guseinov measure on $(\mathbb{T},\mathcal{B}(\mathbb{T}))$ has been denoted by $\lambda_{\Delta}$, while whenever $E$ is a Polish space, and $\mathcal{B}(E)$ the associated Borel $\sigma-$ field on $E$, we denote by $M_1(E)$ the set of Borel probability measures on $(E,\mathcal{B}(E))$.

\subsection{Spaces of adapted processes and filtrations within this framework} 

Subsequent applications require the space
$$H_{\mathbb{T}}:= \left\{h \in C_{\mathbb{T}} : h(t) := \int_{[0,t)_{\mathbb{T}}} h^{\Delta}(s)  \lambda_{\Delta}(ds) , \forall t \in \mathbb{T}, \  h^{\Delta}\in L^2(\lambda_{\Delta}) \right\},$$ which is a separable Hilbert space, when it is endowed with the scalar product $$<h,k>_{H_{\mathbb{T}}}:= \int_{\mathbb{T}} h^{\Delta}(s) k^{\Delta}(s) \lambda_{\Delta} (ds), \ \forall h,k\in H_{\mathbb{T}}.$$ In the particular case where $\mathbb{T}= [0,1]$, we adopt the notation $H^1=H_{\mathbb{T}}$ from \cite{MALLIAVIN}; since in this case $\lambda_{\Delta}=\lambda$, it coincides with the classical Cameron-Martin space (\cite{MALLIAVIN}). The interest of this space within this paper will be justified \textit{a posteriori} in Section~\ref{ANALYTICBROWNIAN} as we identify the analytic structure of the $\mathbb{T}$ sampled Wiener measure.  Subsequently and until the end of the paper, we shall be essentially interested in sampled Brownian filtrations, more accurately in filtrations generated by a sampled Brownian motion. It motivates a specific notation within the framework of this paper. For $t\in \mathbb{T}$, we define $$\mathcal{F}_t := \sigma(W_s : s\in [0,t]_{\mathbb{T}}),$$ where $W_s :\omega\in C_{\mathbb{T}} \to \omega(s)\in \mathbb{R}$ denotes the evaluation function at any $s\in \mathbb{T}$. 
For each $s,t\in \mathbb{T}$ such that $s\leq t$, we have $\mathcal{F}_s \subset \mathcal{F}_t$, and $(\mathcal{F}_t)_{t\in \mathbb{T}}$ is  usually called the filtration generated by the evaluation process $(W_t)_{t\in \mathbb{T}}$, or the natural filtration of the canonical space.

Whenever $(\Omega, \mathcal{A}, \mathcal{P})$ denotes a complete probability space and $\mathcal{F}\subset \mathcal{A}$ is a $\sigma-$ field, we denote its $\mathbb{P}$ completion by $\mathcal{F}^{\mathbb{P}}$, which, by following an usual convention in stochastic analysis, is here the smallest sigma-field which contains both the sets of $\mathcal{F}$ and all the $\mathcal{P}$ negligible sets of $(\Omega, \mathcal{A}, \mathcal{P})$, not only those of $\mathcal{F}$.  Moreover, if $\mathbb{Q}$ denotes a probability measure which is defined on a not necessarily complete measurable space $(\Omega, \mathcal{F})$, then we still denote by $\mathbb{Q}$ its extension to the completed $\sigma-$ field $\mathcal{F}^{\mathbb{Q}}$.
In particular, whenever $\nu\in M_1(C_{\mathbb{T}})$ denotes a Borel probability measure on $C_{\mathbb{T}}$, $\mathcal{B}(C_{\mathbb{T}})^\nu$ denotes the $\nu-$ completion of $\mathcal{B}(C_{\mathbb{T}})$, while we still denote by $\nu$ the extension of this probability measure to $\mathcal{B}(C_{\mathbb{T}})^\nu$.

 To use tools of functional analysis, and of nonlinear analysis, we also need the following Hilbert spaces. We denote by $L^2(\mu_{\mathbb{T}}, H_{\mathbb{T}})$ the space of the $\mu_{\mathbb{T}}$ equivalence classes of Borel measurable functions $ u : \omega \in C_{\mathbb{T}} \to u(\omega) \in C_{\mathbb{T}}$ such that $$\int_{C_{\mathbb{T}}} \|u(\omega)\|_{H_{\mathbb{T}}}^2 \mu_{\mathbb{T}}(d\omega) <+\infty,$$ which are identified when they coincide outside some $\mu_{\mathbb{T}}-$ negligible set. To obtain suitable closed subsets of adapted processes, in this paper we use the following convention :
\begin{definition}
We denote by $L^2_a(\mu_{\mathbb{T}}, H_{\mathbb{T}})$, the subset of the $u:=$ $\int_{[0,.)_{\mathbb{T}}}u^{\Delta}_s$ $\lambda_{\Delta}(ds)$  $\in L^2(\mu_{\mathbb{T}}, H_{\mathbb{T}})$ which further satisfy both conditions :
\begin{enumerate}[(i)]
\item For all $t\in \mathbb{T}$, the random variable $u_t : \omega \in C_{\mathbb{T}} \to u_t(\omega) \in \mathbb{R}$ is $\mathcal{F}_t^{\mu_{\mathbb{T}}}-$ measurable.
\item For all $t\in \mathbb{T}_{rs}$ which is right-scattered, the random variable $u^{\Delta}_t : \omega \in C_{\mathbb{T}} \to u_t^{\Delta}(\omega) \in \mathbb{R}$ is $\mathcal{F}_t^{\mu_{\mathbb{T}}}-$ measurable.
\end{enumerate}
\end{definition}
In the above, we used the following notation : when $u\in L^2(\mu_{\mathbb{T}}, H_{\mathbb{T}})$ and $t\in \mathbb{T}$, $u_t:= W_t\circ u$, where $W_t: C_{\mathbb{T}} \to \mathbb{R}$ still denotes the evaluation map at $t\in \mathbb{T}$; depending on circumstances, $\circ$ may denote either the usual pullback of functions, or of equivalence classes of measurable functions whenever its is uniquely defined.  Notice that whenever  $\mathbb{T}=[0,1]$, the previous definition reduces to the condition $(i)$ which is usual in continuous time.  The previous definition is notably motivated by the following Proposition, which easily follows from the Jensen inequality :

\begin{proposition}
The space $L^2_a(\mu_{\mathbb{T}}, H_{\mathbb{T}})$ is a closed linear subspace of the Hilbert space $L^2(\mu_{\mathbb{T}}, H_{\mathbb{T}})$, for the strong topology.
\end{proposition}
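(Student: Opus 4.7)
The plan is to write $L^2_a(\mu_{\mathbb{T}}, H_{\mathbb{T}})$ as an intersection over $t\in \mathbb{T}$ (and $t\in \mathbb{T}_{rs}$) of preimages of closed subspaces of $L^2(\mu_{\mathbb{T}})$ under bounded linear evaluation maps, and to infer closedness from this. Linearity is immediate, since for $u,v\in L^2_a(\mu_{\mathbb{T}},H_{\mathbb{T}})$ and scalars $\alpha,\beta$ one has $(\alpha u+\beta v)_t = \alpha u_t+\beta v_t$ and $(\alpha u+\beta v)^{\Delta}_t = \alpha u^{\Delta}_t+\beta v^{\Delta}_t$, which are $\mathcal{F}_t^{\mu_{\mathbb{T}}}$-measurable as linear combinations of $\mathcal{F}_t^{\mu_{\mathbb{T}}}$-measurable random variables.

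For condition $(i)$, I would introduce, for each $t\in \mathbb{T}$, the evaluation map $e_t : L^2(\mu_{\mathbb{T}}, H_{\mathbb{T}}) \to L^2(\mu_{\mathbb{T}})$, $u\mapsto u_t = W_t\circ u$. Using the representation $u(\omega) = \int_{[0,\cdot)_{\mathbb{T}}} u^{\Delta}_s(\omega)\,\lambda_{\Delta}(ds)$ built into the definition of $H_{\mathbb{T}}$, Jensen's inequality applied to the convex function $x\mapsto x^2$, together with $\lambda_{\Delta}(\mathbb{T})=1$, gives
\[
|u_t(\omega)|^2 \;\leq\; \lambda_{\Delta}\bigl([0,t)_{\mathbb{T}}\bigr) \int_{[0,t)_{\mathbb{T}}}|u^{\Delta}_s(\omega)|^2\,\lambda_{\Delta}(ds) \;\leq\; \|u(\omega)\|_{H_{\mathbb{T}}}^{2}.
\]
Integrating against $\mu_{\mathbb{T}}$ shows $e_t$ is bounded linear with norm at most $1$. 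Condition $(i)$ amounts to $e_t(u)\in L^2(\mathcal{F}_t^{\mu_{\mathbb{T}}},\mu_{\mathbb{T}})$, which is a closed linear subspace of $L^2(\mu_{\mathbb{T}})$; hence $e_t^{-1}\bigl(L^2(\mathcal{F}_t^{\mu_{\mathbb{T}}})\bigr)$ is closed in $L^2(\mu_{\mathbb{T}},H_{\mathbb{T}})$.

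For condition $(ii)$, I would exploit the Lebesgue decomposition~(\ref{Hilgerdecomp}): for $t\in \mathbb{T}_{rs}$ one has $[t,\bar{\sigma}(t))_{\mathbb{T}}=\{t\}$ and $\lambda_{\Delta}(\{t\})=\bar{\mu}_g(t)>0$, so that
\[
u^{\Delta}_t \;=\; \frac{u_{\bar{\sigma}(t)} - u_t}{\bar{\mu}_g(t)} \qquad \mu_{\mathbb{T}}\text{-a.s.}
\]
Thus $u\mapsto u^{\Delta}_t$ is a bounded linear combination of two evaluation maps, in particular continuous from $L^2(\mu_{\mathbb{T}},H_{\mathbb{T}})$ into $L^2(\mu_{\mathbb{T}})$, and its preimage of the closed subspace $L^2(\mathcal{F}_t^{\mu_{\mathbb{T}}},\mu_{\mathbb{T}})$ is closed. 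Taking the intersection,
\[
L^2_a(\mu_{\mathbb{T}}, H_{\mathbb{T}}) \;=\; \bigcap_{t\in \mathbb{T}} e_t^{-1}\bigl(L^2(\mathcal{F}_t^{\mu_{\mathbb{T}}})\bigr)\;\cap\;\bigcap_{t\in \mathbb{T}_{rs}} \bigl(e^{\Delta}_t\bigr)^{-1}\bigl(L^2(\mathcal{F}_t^{\mu_{\mathbb{T}}})\bigr)
\]
exhibits the space as an intersection of closed linear subspaces, hence closed. The only delicate point is the $\mu_{\mathbb{T}}$-a.s. identification of the Hilger jump $u^{\Delta}_t$ at right-scattered times via~(\ref{Hilgerdecomp}); everything else is routine continuity and closedness, which is precisely why the proposition follows at once from Jensen's inequality.
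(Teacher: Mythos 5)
Your argument is correct and fills in exactly the route the paper indicates (the paper offers no written proof beyond the remark that the claim ``easily follows from the Jensen inequality'', which is precisely where Jensen enters in your bound $|u_t(\omega)|^2\leq \lambda_{\Delta}([0,t)_{\mathbb{T}})\,\|u(\omega)\|_{H_{\mathbb{T}}}^2\leq \|u(\omega)\|_{H_{\mathbb{T}}}^2$ for the evaluation maps). The one point you rightly single out --- that at a right-scattered $t$ the atom $\lambda_{\Delta}(\{t\})=\bar{\mu}_g(t)>0$ forces $u^{\Delta}_t=(u_{\bar{\sigma}(t)}-u_t)/\bar{\mu}_g(t)$, so that condition $(ii)$ is also a preimage of a closed subspace under a continuous linear map --- is handled correctly, and the completeness of $\mathcal{F}_t^{\mu_{\mathbb{T}}}$ makes the measurability conditions well defined on equivalence classes, so the intersection representation closes the argument.
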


To control specific absolute continuity assumptions below, we will use the subset  $L^{\infty}_a(\mu_{\mathbb{T}},H_{\mathbb{T}})$ of the $h\in L^2_a(\mu_{\mathbb{T}}
,H_{\mathbb{T}})$ such that there exists a $c>0$ which satisfies $$\|\theta\|_{H_{\mathbb{T}}}< c, \mu_{\mathbb{T}}-a.s..$$

\section{The analytic structure of the sampled Brownian motion}
\label{ANALYTICBROWNIAN}

\subsection{The abstract Wiener space $(H_{\mathbb{T}}, C_{\mathbb{T}}, \mu_{\mathbb{T}})$}

An abstract Wiener space is the product which is usually obtained when we apply the Gross procedure (\cite{gr65}) to a separable Hilbert space. We refer to \cite{KUO}, \cite{MALLIAVIN},  \cite{STROOCK}, \cite{USTUNELBOOKZ} for much details and several approaches to the construction of such spaces. Recall that abstract Wiener spaces of L. Gross provide the analytic structure of Wiener measures on Banach spaces, and provide in particular an analytic point of view on the classical Wiener measure (\cite{WIENER1}).

\begin{proposition} \label{JDEF}
Let $J : C_{[0,1]} \to C_{\mathbb{T}}$ be the linear continuous function which to any $\omega\in C_{[0,1]}$ associates its restriction $J(\omega) \in C_{\mathbb{T}}$ to $\mathbb{T}$,  that is $J_t(\omega):= \omega(t)$, $\forall t\in \mathbb{T}$. Then, the restriction $J|_{H^1}$ of the linear continuous operator $J$ to $H^1$ has range $H_{\mathbb{T}}$, and admits an adjoint operator $$J|_{H^1}^\star : H_{\mathbb{T}} \to H^1,$$ with domain the whole $H_{\mathbb{T}}$, which maps $H_{\mathbb{T}}$ isometrically into its range $J|_{H^1}^\star(H_{\mathbb{T}})$.
\end{proposition}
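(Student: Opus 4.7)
My plan is to construct an explicit linear isometry $K : H_{\mathbb{T}} \to H^1$ that is a right-inverse of $J|_{H^1}$ and, \emph{a posteriori}, coincides with the Hilbert adjoint $J|_{H^1}^\star$; the three assertions of the proposition then follow at once. The candidate, suggested by the pushforward identity $\lambda_{\Delta} = \bar{\rho}_\star \lambda$, is
$$K(k)(t) := \int_0^t k^{\Delta}(\bar{\rho}(s))\,\lambda(ds), \qquad t\in[0,1].$$

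First I would verify that $K$ is a well-defined linear isometry. Applied to $f=(k^{\Delta})^2$, the pushforward identity gives $k^{\Delta}\circ\bar{\rho}\in L^2([0,1],\lambda)$ with $\|k^{\Delta}\circ\bar{\rho}\|_{L^2(\lambda)} = \|k\|_{H_{\mathbb{T}}}$, so $K(k)\in H^1$ and $\|K(k)\|_{H^1}=\|k\|_{H_{\mathbb{T}}}$. Next, I would show $J\circ K = \mathrm{id}_{H_{\mathbb{T}}}$: for $t\in\mathbb{T}$, identity~(\ref{inversima}) together with the fact that $\lambda(\bar{\rho}^{-1}(\{t\})\cap[0,t]) = 0$ yields $\bar{\rho}^{-1}([0,t)_{\mathbb{T}}) = [0,t]$ up to a $\lambda$-null set, so a change of variables gives
$$K(k)(t) = \int_{\bar{\rho}^{-1}([0,t)_{\mathbb{T}})} k^{\Delta}(\bar{\rho}(s))\,\lambda(ds) = \int_{[0,t)_{\mathbb{T}}} k^{\Delta}(u)\,\lambda_{\Delta}(du) = k(t).$$
This establishes $H_{\mathbb{T}}\subset J|_{H^1}(H^1)$. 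The reverse inclusion, together with the boundedness of $J|_{H^1}$ as an operator $H^1 \to H_{\mathbb{T}}$, I would obtain by exhibiting, for each $h\in H^1$, its delta derivative $h^{\Delta}$: namely $h'$ on $\mathbb{T}\setminus\mathbb{T}_{rs}$ and $(h(\bar{\sigma}(t))-h(t))/\bar{\mu}_g(t)$ on $\mathbb{T}_{rs}$; Cauchy--Schwarz on each jump term together with the Lebesgue decomposition~(\ref{Hilgerdecomp}) then gives $\|h^{\Delta}\|_{L^2(\lambda_{\Delta})}\le\|h\|_{H^1}$. In particular the Hilbert adjoint $J|_{H^1}^\star$ exists as a bounded operator with full domain $H_{\mathbb{T}}$.

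To identify $K$ with $J|_{H^1}^\star$, I would test the adjoint identity on the reproducing kernels $\psi_t\in H_{\mathbb{T}}$ ($t\in\mathbb{T}$) defined by $\psi_t^{\Delta}=\mathbf{1}_{[0,t)_{\mathbb{T}}}$, which satisfy $\langle k,\psi_t\rangle_{H_{\mathbb{T}}} = k(t)$. A direct computation analogous to the one above gives $K(\psi_t)(s) = s\wedge t =: \varphi_t(s)$, which is the reproducing kernel of $H^1$ at $t$. Therefore
$$\langle J(h),\psi_t\rangle_{H_{\mathbb{T}}} = h(t) = \langle h,\varphi_t\rangle_{H^1} = \langle h,K(\psi_t)\rangle_{H^1}, \qquad \forall h\in H^1.$$
Since the family $\{[0,t)_{\mathbb{T}} : t\in\mathbb{T}\}$ generates $\mathcal{B}(\mathbb{T})$, the span of $\{\psi_t : t\in\mathbb{T}\}$ is dense in $H_{\mathbb{T}}$, and the adjoint identity extends by linearity and continuity, yielding $K = J|_{H^1}^\star$. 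The isometry of $K$ then gives the claimed isometric property of the adjoint.

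The main obstacle is the careful handling of the preimages $\bar{\rho}^{-1}([0,t)_{\mathbb{T}})$ and the boundary behaviour at left-scattered versus left-dense points of $\mathbb{T}$; once this is pinned down cleanly via~(\ref{inversima}) and~(\ref{Hilgerdecomp}), everything else reduces to standard Hilbert-space manipulations.
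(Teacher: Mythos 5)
Your proposal is correct and follows essentially the same route as the paper: your operator $K$ coincides ($\lambda$-a.e.\ at the level of derivatives, since $\bar{\rho}(s)=s$ for $\lambda$-a.e.\ $s\in\mathbb{T}$ and $\bar{\rho}\equiv t_i$ on $]t_i,\bar{\sigma}(t_i)]$) with the section $g$ built in the paper's proof, and both arguments establish the range identity $J(H^1)=H_{\mathbb{T}}$, the norm equality $\|K(k)\|_{H^1}=\|k\|_{H_{\mathbb{T}}}$, and the identification $K=J|_{H^1}^\star$. The only cosmetic difference is that you verify the adjoint identity on the reproducing kernels $\psi_t$ and extend by density, whereas the paper checks it directly for all $\widetilde{h}\in H^1$ via the Lebesgue decomposition~(\ref{Hilgerdecomp}).
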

\begin{proof}
Given $\widetilde{h}:=\int_0^. \dot{\widetilde{h}}_s ds \in H^1$, define \begin{equation} \label{Jhdensity} \widetilde{h}^{\Delta}_t := \dot{\widetilde{h}}_t 1_{\mathbb{T}\setminus \mathbb{T}_{rs}}(t) + \sum_{t_i\in \mathbb{T}_{rs}} \left(\frac{\widetilde{h}_{\bar{\sigma}(t_i)} - \widetilde{h}_{t_i} }{\bar{\sigma}(t_i)-t_i}\right) 1_{\{t_i\}}(t), \ \forall t\in \mathbb{T},\end{equation} From the definitions, the Lebesgue decomposition formula~(\ref{Hilgerdecomp}) of the Guseinov measure $\lambda_{\Delta}$ easily yields $$\widetilde{h}(t)= \int_{[0,t)_{\mathbb{T}}} \widetilde{h}^{\Delta}_s \lambda_{\Delta}(ds), \ \forall t\in \mathbb{T},$$ while together with~(\ref{Hilgerdecomp}), Jensen's inequality ensures that $\widetilde{h}^{\Delta}\in \mathcal{L}^2(\lambda_{\Delta})$. This entails the inclusion $J(H^1)\subset H_{\mathbb{T}}$. Conversely, given $h:=\int_{[0,.)} h^{\Delta}_s \lambda_{\Delta}(ds) \in H_{\mathbb{T}}$, define $$g_t(h):= \int_0^t \dot{\widetilde{h}}_s \lambda(ds), \ \forall t\in [0,1]$$ where we now set $$\dot{\widetilde{h}}_s := (h^{\Delta} 1_{\mathbb{T}})(s) + \sum_{t_i \in \mathbb{T}_{rs}} h^{\Delta}(t_i) 1_{]t_i, \bar{\sigma}(t_i)]}(s), \ \forall s\in [0,1],$$ which easily entails with~(\ref{Hilgerdecomp}) that $\|g(h)\|_{H^1}=\|h\|_{H_{\mathbb{T}}}<+\infty$, and that $J\circ g(h)= h$, that is $g_t(h)=h_t$, $\forall t\in \mathbb{T}$. This ensures that $H_{\mathbb{T}}\subset J(H^1)$, so that we finally obtain $H_{\mathbb{T}}= J(H^1)$.  Hence, the restriction $J|_{H^1} : H^1 \to H_{\mathbb{T}}$ of the linear operator $J$ to $H^1$ has \textit{a fortiori} a dense domain. In this case, from standard results of functional analysis (see \cite{BREZIS}), it has an adjoint operator $J|_{H^1}^\star$. Its domain coincides with $H_{\mathbb{T}}$, since together with the decomposition formula~(\ref{Hilgerdecomp}), Jensen's inequality and the Cauchy-Schwarz inequality easily yield $$ |<h, J(\widetilde{h})>_{H_{\mathbb{T}}}| \leq 2\|h\|_{H_{\mathbb{T}}}\|\widetilde{h}\|_{H^1}, \ \forall h\in H_{\mathbb{T}}, \ \widetilde{h}\in H^1,$$ from~(\ref{Jhdensity}).  Therefore, $J|_{H^1}^\star : H_{\mathbb{T}}\to H^1$ is well defined and satisfies \begin{equation} \label{adjointcheck1} <\widetilde{h},  J|_{H^1}^\star[h]>_{H^1}= <J(\widetilde{h}), h>_{H_{\mathbb{T}}}, \end{equation} $\forall h\in H_{\mathbb{T}},$ $\forall \widetilde{h} \in H^1.$  Then, it easily follows from the decomposition formula~(\ref{Hilgerdecomp}), that given $h\in H_{\mathbb{T}}$, $g(h)\in H^1$ defined above satisfies~(\ref{adjointcheck1}), $\forall \widetilde{h}\in H^1$. Hence, we have identified $J|_{H^1}^{\star}$; $J|_{H^1}^\star(h)= g(h)$, $\forall h\in H_{\mathbb{T}}$, so that the isometry property follows from $\|g(h)\|_{H^1}= \|h\|_{H_{\mathbb{T}}}$, $\forall h\in H_{\mathbb{T}}$. 
\end{proof}

\begin{proposition}  \label{abstractWiener}
The triplet  $(H_{\mathbb{T}}, C_{\mathbb{T}}, \mu_{\mathbb{T}})$ is an abstract Wiener space. \end{proposition}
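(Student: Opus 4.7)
The strategy is to transfer the classical abstract Wiener space structure $(H^{1},C_{[0,1]},\mu_{W})$ through the restriction map $J:C_{[0,1]}\to C_{\mathbb{T}}$ of Proposition~\ref{JDEF}, using that $J|_{H^{1}}^{\star}$ is an isometry onto its range.

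First, I would verify that $H_{\mathbb{T}}$ is densely and continuously embedded in $C_{\mathbb{T}}$. Continuity of the embedding follows from Cauchy--Schwarz: for any $h\in H_{\mathbb{T}}$ and $t\in\mathbb{T}$,
\[
|h(t)|\leq \int_{[0,t)_{\mathbb{T}}}|h^{\Delta}|\,\lambda_{\Delta}(ds)\leq \lambda_{\Delta}(\mathbb{T})^{1/2}\|h\|_{H_{\mathbb{T}}}=\|h\|_{H_{\mathbb{T}}},
\]
since $\lambda_{\Delta}(\mathbb{T})=\bar{\sigma}(1)-0=1$ by~(\ref{guseinovvalues}). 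For density, I would argue that $J:C_{[0,1]}\to C_{\mathbb{T}}$ is surjective by piecewise-linear extension on the open components of $[0,1]\setminus\mathbb{T}$ (whose endpoints lie in $\mathbb{T}$ because $\mathbb{T}$ is closed and contains $\{0,1\}$). Since $H^{1}$ is dense in $C_{[0,1]}$ (classical Cameron--Martin density), continuity and surjectivity of $J$ give that $H_{\mathbb{T}}=J(H^{1})$ is dense in $C_{\mathbb{T}}$.

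Next I would identify $\mu_{\mathbb{T}}$ with the pushforward $J_{\star}\mu_{W}$. Indeed, $J_{\star}\mu_{W}\in M_{1}(C_{\mathbb{T}})$, and for any finite family $0=t_{0}<t_{1}<\dots<t_{n}$ in $\mathbb{T}$, the joint law of $(J_{t_{1}},\dots,J_{t_{n}})$ under $\mu_{W}$ is the Gaussian density appearing in the definition of the sampled Wiener measure, so the two Borel probabilities coincide by the monotone class / Kolmogorov uniqueness argument.

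Finally, I would verify the defining characteristic functional of an abstract Wiener space, namely
\[
\int_{C_{\mathbb{T}}} e^{i\ell(\omega)}\mu_{\mathbb{T}}(d\omega)=\exp\!\left(-\tfrac{1}{2}\|\ell|_{H_{\mathbb{T}}}\|_{H_{\mathbb{T}}^{\star}}^{2}\right),\quad \forall \ell\in C_{\mathbb{T}}^{\star}.
\]
Given $\ell\in C_{\mathbb{T}}^{\star}$, Riesz's theorem produces $k\in H_{\mathbb{T}}$ with $\ell(h)=\langle h,k\rangle_{H_{\mathbb{T}}}$ for $h\in H_{\mathbb{T}}$. The identity $\mu_{\mathbb{T}}=J_{\star}\mu_{W}$ and the abstract Wiener space structure of $(H^{1},C_{[0,1]},\mu_{W})$ yield
\[
\int_{C_{\mathbb{T}}} e^{i\ell(\omega)}\mu_{\mathbb{T}}(d\omega)=\int_{C_{[0,1]}} e^{i(\ell\circ J)(\omega)}\mu_{W}(d\omega)=\exp\!\left(-\tfrac{1}{2}\|\ell\circ J|_{H^{1}}\|_{(H^{1})^{\star}}^{2}\right).
\]
Using~(\ref{adjointcheck1}), the Riesz representer of $\ell\circ J|_{H^{1}}$ in $H^{1}$ is exactly $J|_{H^{1}}^{\star}(k)$, and the isometry statement of Proposition~\ref{JDEF} gives $\|J|_{H^{1}}^{\star}(k)\|_{H^{1}}=\|k\|_{H_{\mathbb{T}}}=\|\ell|_{H_{\mathbb{T}}}\|_{H_{\mathbb{T}}^{\star}}$, which closes the computation.

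The main technical obstacle is the density of $H_{\mathbb{T}}$ in $C_{\mathbb{T}}$: the piecewise-linear extension argument must be carried out rigorously, paying attention to accumulation points of right-scattered points in $\mathbb{T}_{rd}$, in order to ensure that the extended function genuinely lies in $C_{[0,1]}$; all remaining steps reduce to invoking classical facts on the Wiener measure together with the isometry already proved in Proposition~\ref{JDEF}.
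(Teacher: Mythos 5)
Your proposal is correct and follows essentially the same route as the paper: identify $\mu_{\mathbb{T}}=J_{\star}\mu_{W}$, observe that the Riesz representer of $\ell\circ J|_{H^{1}}$ is $J|_{H^{1}}^{\star}$ applied to the representer of $\ell$ (the paper's identity $j^{W}(l\circ J)=J|_{H^{1}}^{\star}(j^{\mathbb{T}}(l))$), and conclude via the isometry of Proposition~\ref{JDEF}. The only difference is that you spell out the density of $H_{\mathbb{T}}$ in $C_{\mathbb{T}}$ (via surjectivity of $J$ by piecewise-linear extension) and the finite-dimensional identification of the pushforward, both of which the paper asserts as standard.
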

\begin{proof}
Since the continuous injection of the separable Hilbert space $H_{\mathbb{T}}$ in the separable Banach space $C_{\mathbb{T}}$ is easily checked to have a dense range by standard tools of analysis, it is enough to identify the characteristic function of $\mu_{\mathbb{T}}$.  Recall that since $H_{\mathbb{T}}$ (resp. $H^1$) is a separable Hilbert space whose injection in $C_{\mathbb{T}}$ (resp. in $W:=C_{[0,1]}$) is both dense and continuous (and also compact), we already know that for any $l\in C_{\mathbb{T}}^\star$ (resp. $\widetilde{l}\in C_{[0,1]}^\star$), the topological dual of $C_{\mathbb{T}}$ (resp. of $C_{[0,1]}$), there exists a unique $j^{\mathbb{T}}(l)\in H_{\mathbb{T}}$ (resp. $j^{W}(\widetilde{l})\in H^1$) such that \begin{equation} \label{jstartmp} <l, h>_{C_{\mathbb{T}}^\star, C_{\mathbb{T}}} = <j^{\mathbb{T}}(l), h>_{H_{\mathbb{T}}}, \ \forall h\in H_{\mathbb{T}},\end{equation} respectively  \begin{equation} <\widetilde{l}, \widetilde{h}>_{C_{[0,1]}^\star, C_{[0,1]}} = <j^{W}(\widetilde{l}), \widetilde{h}>_{H^1}, \ \forall \widetilde{h}\in H^1.\end{equation}  For $l\in C_{\mathbb{T}}^\star$, and $\widetilde{h}\in H^1$, since $J: C_{[0,1]}\to C_{\mathbb{T}}$ is both linear and continuous, and $J(C_{[0,1]})= C_{\mathbb{T}}$,  we have $l\circ J\in W^{\star}$,
 so that we obtain $$<j^{W}(l\circ J), \widetilde{h}>_{H^1}= <l\circ J, \widetilde{h}>_{C_{[0,1]}^\star, C_{[0,1]}},$$ from the definition of $j^W$. On the other hand, since $J\circ \widetilde{h}\in H_{\mathbb{T}}\subset C_{\mathbb{T}}$ follows from Proposition~\ref{JDEF}, the definition of $j^{\mathbb{T}}$ and~(\ref{jstartmp})  yield $$<l\circ J, \widetilde{h}>_{C_{[0,1]}^\star, C_{[0,1]}} = <l, J(\widetilde{h})>_{C_{\mathbb{T}}^\star, C_{\mathbb{T}}} = <j^{\mathbb{T}}(l), J|_{H^1}(\widetilde{h})>_{H_{\mathbb{T}}}.$$ Together with the definition of the adjoint we finally obtain $$<j^{W}(l\circ J), \widetilde{h}>_{H^1} = <J|_{H^1}^\star(j^{\mathbb{T}}(l)), \widetilde{h}>_{H^1},$$ and since it holds for all $\widetilde{h}\in H^1$, we obtain \begin{equation} j^{W}(l\circ J) =J|_{H^1}^\star(j^{\mathbb{T}}(l)). \label{identif1} \end{equation}

Let $l\in C_{\mathbb{T}}^\star$, since $\mu_{\mathbb{T}}=J_{\star}\mu_W$ coincide with the pushforward of the Borel probability measure $\mu_W$ by the measurable function $J$, from the definition of the classical Wiener measure, from~(\ref{identif1}), and from the isometric property established by Proposition~\ref{JDEF}, we obtain  \begin{eqnarray*}  \int_{C_{\mathbb{T}}}\exp(i <l,\omega>_{C_{\mathbb{T}}^\star, C_{\mathbb{T}}})  \mu_{\mathbb{T}}(d\omega) & = & \int_{C_{\mathbb{T}}}\exp(i <l,\omega>_{C_{\mathbb{T}}^\star, C_{\mathbb{T}}}) J_{\star}{\mu_W}(d\omega) \\ & = &  \int_{W}\exp(i <l,J(\omega)>_{C_{\mathbb{T}}^\star, C_{\mathbb{T}}}) {\mu_W}(d\omega)  \\ & = &   \int_{W}\exp(i <l\circ J,\omega>_{C_{[0,1]}^\star, C_{[0,1]}}) {\mu_W}(d\omega)  \\ & = & \exp\left(- \frac{\|j^{W}(l\circ J)\|_{H^1}^2}{2}\right) \\ & = &  \exp\left(- \frac{\|J|_{H^1}^\star (j^{\mathbb{T}}(l))\|_{H^1}^2}{2}\right) \\ & = &  \exp\left(- \frac{\| j^{\mathbb{T}}(l)\|_{H^\Delta }^2}{2}\right) , \end{eqnarray*}  which proves the result; we used the notation $i:=\sqrt{-1}$. 
 \end{proof}

\subsection{The sampled Wiener measure as an abstract Wiener space}

From Proposition~\ref{abstractWiener}, the analytic structure of the sampled Brownian motion is provided by the abstract Wiener space $(H_{\mathbb{T}},C_{\mathbb{T}},\mu_{\mathbb{T}})$  (see \cite{gr65}, \cite{KUO}, \cite{STROOCK}), where $H_{\mathbb{T}}$ is expressed by integrals of Hilger's time scales analysis. We recall here the main consequences useful to perform explicit computations below, in particular to identify the Paley-Wiener integrals.  First, the injection $$C_{\mathbb{T}}^\star \hookrightarrow_{j^{\mathbb{T}}} H_{\mathbb{T}},$$ of the topological dual $C_{\mathbb{T}}^\star$ of $C_{\mathbb{T}}$ into $H_{\mathbb{T}}$ is dense, and  when $l\in C_{\mathbb{T}}^\star$, its image $j^{\mathbb{T}}(l)\in H_{\mathbb{T}}$ is the unique element of $H_{\mathbb{T}}$ such that \begin{equation} \label{jTident} <j^{\mathbb{T}}(l),  h >_{H_{\mathbb{T}}}  = <l, h>_{C_{\mathbb{T}}^\star, C_{\mathbb{T}}}, \ \forall h\in H_{\mathbb{T}}. \end{equation} The latter is of help to identify the characteristic function $\phi_{\mu_{\mathbb{T}}} : C_{\mathbb{T}}^\star \to \mathbb{R}$ of  $\mu_{\mathbb{T}}$ which is given by \begin{equation} \phi_{\mu_{\mathbb{T}}}(l) = \exp\left( -\frac{\|j^{\mathbb{T}}(l)\|_{H_{\mathbb{T}}}^2}{2}\right),  \ \forall l\in C_{\mathbb{T}}^\star, \label{caracteristicdelta} \end{equation} where $$\phi_{\mu_{\mathbb{T}}}(l) := \int_{C_{\mathbb{T}}} \exp( i <l,\omega>_{C_{\mathbb{T}}^\star, C_{\mathbb{T}}} ) \mu_{\mathbb{T}}(d\omega),  \ \forall l\in C_{\mathbb{T}}^\star.$$  In particular, as expected, given $s,t\in \mathbb{T}$ with $s\leq t$, the latter yields $$E_{\mu_{\mathbb{T}}}\left[\exp(i \lambda (W_t-W_s))\right] = \exp\left( -(t-s) \frac{\lambda^2}{2}\right), \ \forall \lambda\in \mathbb{R}$$ as a particular case, which follows by identifying $j^{\mathbb{T}}(l)$ from~(\ref{jTident}) with $l:=W_t-W_s$, $s,t\in \mathbb{T}$, $s\leq t$, and from the linearity of $j^{\mathbb{T}}$; whenever $f\in \mathcal{L}^1(\mu_{\mathbb{T}})$, $E_{\mu_{\mathbb{T}}}[f]$ denotes the mathematical expectation of $f$, that is $E_{\mu_{\mathbb{T}}}[f]:=\int_{C_{\mathbb{T}}} f(\omega) \mu_{\mathbb{T}}(d\omega)$.  Recall that a stochastic process $(X_t)_{t\in \mathbb{T}}$ defined on a complete filtered probability space $(\Omega,\mathcal{A},$ $(\mathcal{A}_t)_{t\in \mathbb{T}},$ $\mathcal{P})$,  is said to be $(\mathcal{A}_t)_{t\in\mathbb{T}}$ adapted, if $X_t$ is $\mathcal{A}_t-$ measurable, $\forall t\in \mathbb{T}$. Furthermore, to shorten the statements subsequently, and keeping a consistency with \cite{BOHNER2}, \cite{SANYAL},  given  a sampled Brownian motion $(B_t)_{t\in \mathbb{T}}$ on a complete filtered space $(\Omega, \mathcal{A}, (\mathcal{A}_t)_{t\in \mathbb{T}}, \mathcal{P})$, $(B_t)_{t\in \mathbb{T}}$ is  called an $(\mathcal{A}_t)_{t\in \mathbb{T}}-$ sampled Brownian motion, if and only if,  the stochastic process $(B_t)_{t\in \mathbb{T}}$ is $(\mathcal{A}_t)_{t\in\mathbb{T}}-$ adapted and  $$E_{\mathcal{P}}\left[\exp(i \lambda (B_t-B_s))\middle| \mathcal{A}_s\right] = \exp\left( -(t-s) \frac{\lambda^2}{2}\right), \ \mathcal{P}-a.s.,$$ $\forall \lambda\in \mathbb{R}$ and $s<t$, $s,t\in \mathbb{T}$, where the left hand term denotes a conditional expectation with respect to the complete $\sigma-$ field $\mathcal{A}_s$ ; see \cite{I-W}.

\subsection{The Paley-Wiener integrals}

Denoting by $$\mathcal{I} :  j^{\mathbb{T}}(C_{\mathbb{T}}^\star) \to L^2(\mu_{\mathbb{T}})$$ the injection which to a $h:=j^{\mathbb{T}}(l)\in j(C_{\mathbb{T}}^\star)\subset H_{\mathbb{T}}$ for some $l\in C_{\mathbb{T}}^\star$,  associates the corresponding $
\mu_{\mathbb{T}}$ equivalence class of measurable real valued functions on $C_{\mathbb{T}}$, so that  $$\mathcal{I}(j^{\mathbb{T}}(l))(\omega) = <l,\omega>_{C_{\mathbb{T}^\star}, C_{\mathbb{T}}}, \mu_{\mathbb{T}}-a.s., \ \forall l\in C_{\mathbb{T}}^\star, $$ recall that we obtain a linear isometry since $$ \| \mathcal{I}( h)\|_{L^2(\mu_{\mathbb{T}})}= \|h\|_{H_
{\Delta}},$$ $\forall h \in j^{\mathbb{T}}(C_{\mathbb{T}}^\star)$; the latter easily follows from~(\ref{caracteristicdelta}) since by linearity of $j^{\mathbb{T}}$,  standard results on the Fourier transform entail that $\mathcal{I}(h)$ is a real-valued centered gaussian random variable on the probability space $(C_{\mathbb{T}}, \mathcal{B}(C_{\mathbb{T}})^{\mu_{\mathbb{T}}}, \mu_{\mathbb{T}})$, with variance $\|h\|_{H_{\mathbb{T}}}^2$, whenever $h=j^{\mathbb{T}}(l)$, $l\in C_{\mathbb{T}}^\star$. Therefore, as an isometry with dense domain it extends uniquely to a function  $\mathcal{I} : H_{\mathbb{T}} \to L^2(\mu_{\mathbb{T}}),$ the 
 Paley-Wiener integral (see \cite{STROOCK}), and we shall use the notation $$\int_{\mathbb{T}} h^
{\Delta}(s) d_{\Delta} W_s := \mathcal{I}(h), \ \mu_{\mathbb{T}}-a.s. \ \forall h\in H_{\mathbb{T}};$$ when $
\mathbb{T}=[0,1]$ it reduces to the classical Paley-Wiener stochastic integral. Within this convention, identifying $j^{\mathbb{T}}(l)$ from~(\ref{jTident}) with now $l:=W_{\bar{\sigma}(t)}-W_{\bar{\sigma}(s)}$, $s,t\in \mathbb{T}$, $s\leq t$,  from the definitions we obtain the following
relation which allows to check that the values of the latter stochastic integrals coincide with those of the $\Delta-$ stochastic integral from the construction of \cite{BOHNER2}, which therefore boils down to an abstract Paley-Wiener integral,  when it is applied to usual indicators :

\begin{proposition}
Let $s,t\in \mathbb{T},$ be such that $s\leq t$, then we have 
$$\int_{\mathbb{T}} 1_{]s,t]}(u) d_{\Delta}W_u = W_{t + \bar{\mu}_g(t)} - W_{s+ \bar{\mu}_g(s)}, \ \mu_{\mathbb{T}}-a.s., \  
$$ where $\bar{\mu}_g$ still denotes the graininess function of the time-scale $\mathbb{T}$. 
\end{proposition}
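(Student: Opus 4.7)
The plan is to follow the hint given in the paragraph preceding the statement, namely to identify $j^{\mathbb{T}}(l)$ for the linear functional $l := W_{\bar{\sigma}(t)} - W_{\bar{\sigma}(s)} \in C_{\mathbb{T}}^\star$, and then conclude via the Paley-Wiener isometry $\mathcal{I}$. Since by definition $\bar{\sigma}(u) = u + \bar{\mu}_g(u)$ for every $u\in \mathbb{T}$, the right-hand side of the announced identity equals $\langle l,\omega\rangle_{C_{\mathbb{T}}^\star, C_{\mathbb{T}}}$, so by definition of $\mathcal{I}$ on $j^{\mathbb{T}}(C_{\mathbb{T}}^\star)$ it suffices to establish that the element $h\in H_{\mathbb{T}}$ having Hilger-type density $h^{\Delta}:= 1_{]s,t]}$ coincides with $j^{\mathbb{T}}(l)$, i.e. satisfies the defining identity~(\ref{jTident}).

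First, I would check that $h^{\Delta}:= 1_{]s,t]}$ indeed belongs to $L^2(\lambda_{\Delta})$ and defines an element of $H_{\mathbb{T}}$ through the primitive $h(u) := \int_{[0,u)_{\mathbb{T}}} 1_{]s,t]}(v)\,\lambda_{\Delta}(dv)$. Then, for an arbitrary $k\in H_{\mathbb{T}}$ with density $k^{\Delta}$, the scalar product in $H_{\mathbb{T}}$ reads
\begin{equation*}
<h,k>_{H_{\mathbb{T}}} = \int_{]s,t]_{\mathbb{T}}} k^{\Delta}(v)\,\lambda_{\Delta}(dv),
\end{equation*}
whereas the value of the functional $l$ at $k$ is
\begin{equation*}
<l,k>_{C_{\mathbb{T}}^\star, C_{\mathbb{T}}} = k(\bar{\sigma}(t))-k(\bar{\sigma}(s)) = \int_{[\bar{\sigma}(s),\bar{\sigma}(t))_{\mathbb{T}}} k^{\Delta}(v)\,\lambda_{\Delta}(dv).
\end{equation*}
The key step is therefore to prove that these two Lebesgue integrals against $\lambda_{\Delta}$ coincide for every $k^{\Delta}\in L^2(\lambda_{\Delta})$, which will give $j^{\mathbb{T}}(l)=h$ by~(\ref{jTident}) and density.

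The main obstacle is to handle correctly the endpoints, depending on whether $s$ and $t$ are right-dense or right-scattered, without missing the atomic part of $\lambda_{\Delta}$. I would argue that the symmetric difference $]s,t]_{\mathbb{T}}\,\triangle\,[\bar{\sigma}(s),\bar{\sigma}(t))_{\mathbb{T}}$ is $\lambda_{\Delta}$-negligible case by case: if $s$ is right-scattered, then $]s,\bar{\sigma}(s)[\,\cap\,\mathbb{T}=\emptyset$, so $]s,\bar{\sigma}(s)]_{\mathbb{T}}=\{\bar{\sigma}(s)\}$ and both intervals agree on their left endpoints; if $s$ is right-dense then $\bar{\sigma}(s)=s$, so the sets differ only by $\{s\}$ for which $\lambda_{\Delta}(\{s\})=\bar{\mu}_g(s)=0$ by~(\ref{guseinovvalues}). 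The same bookkeeping with $t$ replacing $s$ closes the check, using the Lebesgue decomposition~(\ref{Hilgerdecomp}) once more to be sure no atom is lost or double-counted.

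Once $j^{\mathbb{T}}(l)=h$ is established, applying $\mathcal{I}$ and using that it restricts to the canonical injection $C_{\mathbb{T}}^\star\hookrightarrow L^2(\mu_{\mathbb{T}})$ on $j^{\mathbb{T}}(C_{\mathbb{T}}^\star)$ yields
\begin{equation*}
\int_{\mathbb{T}} 1_{]s,t]}(u)\, d_{\Delta}W_u = \mathcal{I}(h)=\mathcal{I}(j^{\mathbb{T}}(l)) = W_{\bar{\sigma}(t)}-W_{\bar{\sigma}(s)},\qquad \mu_{\mathbb{T}}\text{-a.s.},
\end{equation*}
and substituting $\bar{\sigma}(u)=u+\bar{\mu}_g(u)$ gives the announced formula. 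The proof is essentially a direct translation between the three avatars of an indicator: as a density in $H_{\mathbb{T}}$, as the gradient of an evaluation functional on $C_{\mathbb{T}}$, and as a stochastic integral.
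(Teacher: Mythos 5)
Your proposal is correct and follows exactly the route the paper indicates (the paper offers no detailed proof, only the instruction to identify $j^{\mathbb{T}}(l)$ from~(\ref{jTident}) with $l:=W_{\bar{\sigma}(t)}-W_{\bar{\sigma}(s)}$ and apply the Paley--Wiener isometry); your case analysis on right-dense versus right-scattered endpoints, using $\lambda_{\Delta}(\{u\})=\bar{\mu}_g(u)$ from~(\ref{guseinovvalues}), correctly fills in the only nontrivial verification, namely that $]s,t]_{\mathbb{T}}$ and $[\bar{\sigma}(s),\bar{\sigma}(t))_{\mathbb{T}}$ differ by a $\lambda_{\Delta}$-null set.
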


\section{Quasi-invariance, a Kailath-Zakai absolute continuity lemma}
\label{CML}
\subsection{Quasi-invariance}

In this section, we introduce a lemma which will be used subsequently to ensure the definition of the function whose fixed points will be under consideration. This lemma~\ref{Lemmabs} is related to the behaviour of $\mu_{\mathbb{T}}$ null sets within a class of adapted deterministic transports of measures.  As a warm-up,  given $\widetilde{\omega} \in C_{\mathbb{T}}$, define the translation operator $$\tau_{\widetilde{\omega}} : \omega \in C_{\mathbb{T}} \to \omega + \widetilde{\omega}\in C_{\mathbb{T}},$$ which is continuous, and thus Borel measurable. As it is pointed out with much details in \cite{STROOCK2}, due to the Sudakov theorem (\cite{SUDAKOV}) a basic question in such contexts is to determine whether  the pushforward measure ${\tau_{\widetilde{\omega}}}_\star \mu_{\mathbb{T}}\in M_1(C_{\mathbb{T}})$ of $\mu_{\mathbb{T}}$ by $\tau_{\widetilde{\omega}}$ is equivalent to $\mu_{\mathbb{T}}\in M_1(C_{\mathbb{T}})$, i.e. ${\tau_{\widetilde{\omega}}}_\star \mu_{\mathbb{T}}(A)=0$, if and only if, $\mu_{\mathbb{T}}(A)=0$, $\forall A\in \mathcal{B}(C_{\mathbb{T}})$; ${\tau_{\widetilde{\omega}}}_\star \mu_{\mathbb{T}}(A):=\mu_{\mathbb{T}}( \tau_{\widetilde{\omega}}^{-1}(A))$, $\forall A\in \mathcal{B}(C_{\mathbb{T}})$. Although as soon as $\mathbb{T}$ is such that $C_{\mathbb{T}}$ is infinite dimensional, not all $\widetilde{\omega}$ induce a translation which preserve the equivalence classes of $\mu_{\mathbb{T}}$, the Cameron-Martin-Segal theorem, from works of R.H. Cameron and W.T. Martin  (\cite{CAMMARTIN1},\cite{ CAMMARTIN2}), completed by I. Segal  (\cite{SEGAL}), which still holds on abstract Wiener spaces, applies to the present framework; we refer notably to \cite{MALLIAVIN} and \cite{STROOCK2} for references with a clear statement of this result.

\begin{theorem} \label{HCMtheorem} ($H_{\mathbb{T}}$-Cameron-Martin-Segal theorem.) 
For any $h\in C_{\mathbb{T}}$, denoting by $\tau_h : \omega \in C_{\mathbb{T}} \to \omega + h \in C_{\mathbb{T}}$, the probability measure ${\tau_{h}}_{\star}\mu_{\mathbb{T}}$ is absolutely continuous with respect to the sampled Wiener measure $\mu_{\mathbb{T}}$, if and only if, $h\in H_{\mathbb{T}}$ ; otherwise it is singular with respect to $\mu_{\mathbb{T}}$. Moreover, for any $h\in H_{\Delta}$,  ${\tau_{h}}_{\star}\mu_{\mathbb{T}}$ is also equivalent to $\mu_{\mathbb{T}}$, with a Radon-Nikodym derivative $\frac{d{\tau_{h}}_{\star}\mu_{\mathbb{T}}}{d \mu_{\mathbb{T}}}\in L^1(\mu_{\mathbb{T}})$ which is given by $$\frac{d{\tau_{h}}_{\star}\mu_{\mathbb{T}}}{d \mu_{\mathbb{T}}} = \exp\left(\int_{\mathbb{T}} h^{\Delta}(s) d_{\Delta}W_s -\frac{1}{2}\int_{\mathbb{T}} |h^{\Delta}(s)|^2 \lambda_\Delta(ds) \right), \ \mu_{\mathbb{T}}-a.e.$$
\end{theorem}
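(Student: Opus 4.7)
The plan is to recognize this statement as the classical Cameron-Martin-Segal theorem for abstract Wiener spaces applied to the specific structure $(H_{\mathbb{T}}, C_{\mathbb{T}}, \mu_{\mathbb{T}})$ identified in Proposition~\ref{abstractWiener}, and to translate the abstract admissible-shift density into the Paley-Wiener form displayed in the statement. I would treat the two implications separately.

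For the sufficiency direction, I assume $h\in H_{\mathbb{T}}$ and introduce the candidate
\[
Z_h := \exp\left(\mathcal{I}(h) - \tfrac{1}{2}\|h\|_{H_{\mathbb{T}}}^{2}\right),
\]
which is exactly the claimed density, since by definition $\mathcal{I}(h) = \int_{\mathbb{T}} h^{\Delta}(s)\, d_{\Delta}W_s$ and $\|h\|_{H_{\mathbb{T}}}^{2} = \int_{\mathbb{T}}|h^{\Delta}(s)|^{2}\lambda_{\Delta}(ds)$. The proof reduces to identifying $Z_h\cdot\mu_{\mathbb{T}} = {\tau_{h}}_{\star}\mu_{\mathbb{T}}$ by matching characteristic functions. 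The Paley-Wiener isometry $\mathcal{I}: H_{\mathbb{T}} \to L^{2}(\mu_{\mathbb{T}})$ maps every $h$ to a centered real Gaussian variable of variance $\|h\|_{H_{\mathbb{T}}}^{2}$, which is true on the dense subspace $j^{\mathbb{T}}(C_{\mathbb{T}}^{\star})$ by~(\ref{caracteristicdelta}) and extends by $L^{2}$ continuity since the class of real Gaussians is closed in $L^{2}$; polarization then yields $E_{\mu_{\mathbb{T}}}[\mathcal{I}(h)\mathcal{I}(k)] = \langle h, k\rangle_{H_{\mathbb{T}}}$ for all $h,k\in H_{\mathbb{T}}$. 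For $l\in C_{\mathbb{T}}^{\star}$, writing $\langle l, \omega\rangle_{C_{\mathbb{T}}^{\star}, C_{\mathbb{T}}} = \mathcal{I}(j^{\mathbb{T}}(l))(\omega)$ and exploiting the joint Gaussianity of the pair $(\mathcal{I}(j^{\mathbb{T}}(l)), \mathcal{I}(h))$ together with~(\ref{jTident}), a direct moment generating function computation gives
\[
\int_{C_{\mathbb{T}}} e^{i\langle l,\omega\rangle}\, Z_h(\omega)\,\mu_{\mathbb{T}}(d\omega) = e^{i\langle l, h\rangle_{C_{\mathbb{T}}^{\star}, C_{\mathbb{T}}}}\,\phi_{\mu_{\mathbb{T}}}(l) = \phi_{{\tau_{h}}_{\star}\mu_{\mathbb{T}}}(l).
\]
Injectivity of the Fourier transform on Borel probabilities on the separable Banach space $C_{\mathbb{T}}$ yields ${\tau_{h}}_{\star}\mu_{\mathbb{T}} = Z_h\cdot\mu_{\mathbb{T}}$, and strict positivity of $Z_h$ upgrades absolute continuity to equivalence.

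For the singularity direction, when $h\notin H_{\mathbb{T}}$, I would invoke the Gaussian dichotomy on abstract Wiener spaces (see \cite{MALLIAVIN}, \cite{STROOCK2}): the two Gaussian measures $\mu_{\mathbb{T}}$ and ${\tau_{h}}_{\star}\mu_{\mathbb{T}}$ share the same covariance but differ only in mean, so they are either equivalent or mutually singular, and the sufficiency part just proved forces $h\in H_{\mathbb{T}}$ in the equivalent case. To produce a concrete disjoint support when $h\notin H_{\mathbb{T}}$, I would fix an orthonormal basis $(e_n)\subset j^{\mathbb{T}}(C_{\mathbb{T}}^{\star})$ of $H_{\mathbb{T}}$ and consider the partial sums $S_N := \sum_{n=1}^{N}\langle e_n, h\rangle_{H_{\mathbb{T}}}\mathcal{I}(e_n)$, which converge in $L^{2}(\mu_{\mathbb{T}})$ if and only if $h\in H_{\mathbb{T}}$; a Kolmogorov zero-one argument applied to the independent Gaussians $\mathcal{I}(e_n)$ then isolates a $\mu_{\mathbb{T}}$-null Borel set on which ${\tau_{h}}_{\star}\mu_{\mathbb{T}}$ is concentrated.

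The main obstacle is this singularity direction. The absolute continuity half is essentially a bookkeeping Gaussian Fourier computation enabled by Proposition~\ref{abstractWiener} and the Paley-Wiener extension, whereas the singular half genuinely requires a zero-one-law construction of a $\mu_{\mathbb{T}}$-null support for the translated measure. Since this construction is insensitive to the specific nature of $H_{\mathbb{T}}$ and depends only on its separability and its continuous dense inclusion into $C_{\mathbb{T}}$, I would simply quote it from the standard abstract Wiener space literature rather than redo it in the time-scales setting.
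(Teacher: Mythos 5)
Your proposal is correct and follows the same route as the paper, which offers no proof of Theorem~\ref{HCMtheorem} at all: it simply observes that $(H_{\mathbb{T}}, C_{\mathbb{T}}, \mu_{\mathbb{T}})$ is an abstract Wiener space by Proposition~\ref{abstractWiener} and cites the classical Cameron--Martin--Segal theorem from the abstract Wiener space literature (\cite{MALLIAVIN}, \cite{STROOCK2}), exactly the reduction you make before unfolding the standard Fourier/Kakutani argument. The details you supply are the standard ones and are sound, with the minor caveat that in the singularity step the coefficients should be written as the dual pairings $\langle l_n, h\rangle_{C_{\mathbb{T}}^{\star}, C_{\mathbb{T}}}$ with $e_n = j^{\mathbb{T}}(l_n)$, since $\langle e_n, h\rangle_{H_{\mathbb{T}}}$ is not defined for $h \notin H_{\mathbb{T}}$.
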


\subsection{A Kailath-Zakai absolute continuity lemma}

We are now ready to fix further results required for our purposes. Given a Borel measurable function $u : C_{\mathbb{T}} \to C_{\mathbb{T}}$, we now denote \begin{equation} \tau_{u} : \omega \in C_{\mathbb{T}} \to \omega + u(\omega) \in C_{\mathbb{T}}, \label{tauhdef} \end{equation} which defines a deterministic transport plan which maps $\mu_{\mathbb{T}}\in M_1(C_{\mathbb{T}})$ to the pushforward measure ${\tau_u}_{\star}\mu_{\mathbb{T}}\in M_1(C_{\mathbb{T}})$ of $\mu_{\mathbb{T}}\in M_1(C_{\mathbb{T}})$ by the Borel measurable function $\tau_u$. Subsequently, we use the same notation whenever $u\in L^2_a(\mu_{\mathbb{T}}, H_{\mathbb{T}})$; if necessary, for direct images of probability measures by equivalence classes of measurable functions, see  \cite{MALLIAVIN2}. The following Lemma~\ref{Lemmabs} extends to $\mathbb{T}$ a sufficient condition to obtain the absolute continuity (see \cite{HALMOS}) of ${\tau_u}_{\star}\mu_{\mathbb{T}}$ with respect to $\mu_{\mathbb{T}}$, which is denoted by ${\tau_u}_{\star}\mu_{\mathbb{T}} << \mu_{\mathbb{T}}$.  Recall that the notation of  $L^\infty_a(\mu_{\mathbb{T}}, H_{\mathbb{T}})$ has also been given in the previous sections. Although the statement could be extended (see \cite{KailathZakai71} when $\mathbb{T}=[0,1]$), the following lemma will be sufficient to state our main result.

\begin{lemma} \label{Lemmabs}
For any $u\in L^2_a(\mu_{\mathbb{T}}, H_{\mathbb{T}})$, we have $${\tau_{u}}_{\star} \mu_{\mathbb{T}} << \mu_{\mathbb{T}},$$ that is, the Borel probability measure ${\tau_{u}}_{\star} \mu_{\mathbb{T}}$ is absolutely continuous  with respect to the sampled Wiener measure $\mu_{\mathbb{T}}$. Moreover, if $u\in L^\infty_a(\mu_{\mathbb{T}}, H_{\mathbb{T}})$, then ${\tau_{u}}_{\star} \mu_{\mathbb{T}} \sim \mu_{\mathbb{T}},$ i.e. both probability measures are equivalent. 
\end{lemma}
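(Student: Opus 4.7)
My strategy is to lift the problem from $(C_{\mathbb{T}}, \mu_{\mathbb{T}})$ to the classical Wiener space $(C_{[0,1]}, \mu_W)$ via the restriction operator $J$ of Proposition~\ref{JDEF}, and to apply the classical Kailath-Zakai theorem there, whose bounded case reduces to a direct Novikov-Girsanov argument.

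I first introduce the lift $\tilde{u}: C_{[0,1]} \to H^1$ defined by $\tilde{u}(\omega) := J|_{H^1}^\star(u(J(\omega)))$. The isometry property obtained in Proposition~\ref{JDEF}, together with $\mu_{\mathbb{T}} = J_\star \mu_W$, immediately yields $\tilde{u} \in L^2(\mu_W, H^1)$, with the extra bound $\|\tilde{u}\|_{H^1} \leq c$ $\mu_W$-a.s.\ when $u \in L^\infty_a(\mu_{\mathbb{T}}, H_{\mathbb{T}})$. The crucial verification is that $\tilde{u}$ is adapted to $(\mathcal{F}_s^{\mu_W})_{s \in [0,1]}$: the explicit form of $J|_{H^1}^\star$ exhibited in the proof of Proposition~\ref{JDEF} yields
\begin{equation*}
\dot{\tilde{u}}_s(\omega) = u^\Delta(J(\omega))(s)\, 1_{\mathbb{T} \setminus \mathbb{T}_{rs}}(s) + \sum_{t_i \in \mathbb{T}_{rs}} u^\Delta(J(\omega))(t_i)\, 1_{]t_i, \bar{\sigma}(t_i)]}(s),
\end{equation*}
so that adaptedness condition (ii) handles each summand in the countable sum (by monotonicity, since $s > t_i$ there), while condition (i), combined with right-continuity of the augmented filtration at right-dense points, handles the first piece.

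The key observation is then the intertwining identity $J \circ \tau_{\tilde{u}} = \tau_u \circ J$, which follows from $J|_{H^1}^\star(h)(t) = h(t)$ for all $t \in \mathbb{T}$, as extracted from the proof of Proposition~\ref{JDEF}. Combining with $\mu_{\mathbb{T}} = J_\star \mu_W$, this gives ${\tau_u}_\star \mu_{\mathbb{T}} = J_\star {\tau_{\tilde{u}}}_\star \mu_W$, and since pushforward through a measurable map preserves both absolute continuity and equivalence of measures, the problem reduces to the corresponding statement for $\tilde{u}$ on the classical Wiener space.

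To conclude, in the bounded case, the density $\Lambda := \exp\left(-\int_0^1 \dot{\tilde{u}}_s\, dW_s - \frac{1}{2}\int_0^1 |\dot{\tilde{u}}_s|^2\, ds\right)$ satisfies $E_{\mu_W}[\Lambda] = 1$ by Novikov's criterion, is a.s.\ positive, and under $\mathbb{Q} := \Lambda\, \mu_W \sim \mu_W$, Girsanov's theorem identifies $W + \tilde{u}(W)$ as a Brownian motion, so that $\mathbb{Q} \circ \tau_{\tilde{u}}^{-1} = \mu_W$; both absolute continuity and equivalence follow at once from the positivity of $\Lambda$. The general $L^2_a$ case is covered by the classical Kailath-Zakai theorem \cite{KailathZakai71}, or alternatively by localizing with $T_n := \inf\{t \in \mathbb{T}: \int_{[0,t)_{\mathbb{T}}} |u^\Delta|^2\, \lambda_\Delta(ds) > n\}$, reducing to the bounded case on $\{\|u\|_{H_{\mathbb{T}}} \leq \sqrt{n}\}$ and letting $n \to \infty$ using $\mu_{\mathbb{T}}(\|u\|_{H_{\mathbb{T}}}^2 > n) \to 0$. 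The main technical obstacle I expect will be cleanly handling the adaptedness of $\tilde{u}$ at right-dense points, where condition (i) must be promoted from the integrated process to its $H^1$-density.
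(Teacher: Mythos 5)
Your proposal is correct and follows essentially the same route as the paper: lift $u$ to the classical Wiener space via $\widetilde{u}:=J|_{H^1}^\star\circ u\circ J$, check that $\widetilde{u}\in L^2_a(\mu_W,H^1)$, use the intertwining ${\tau_u}_\star\mu_{\mathbb{T}}=J_\star\bigl({\tau_{\widetilde{u}}}_\star\mu_W\bigr)$, and conclude by Kailath--Zakai in the $L^2_a$ case and by Novikov--Girsanov in the bounded case. The only (harmless) difference is that you verify adaptedness of the density $\dot{\widetilde{u}}_s$, invoking right-continuity of the augmented filtration at right-dense points, whereas the paper sidesteps this by checking adaptedness of the integrated process $\widetilde{u}_t$ directly, which is all that the definition of $L^2_a(\mu_W,H^1)$ requires.
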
 
\begin{proof}
Still denote by $\mu_W$ the standard classical Wiener measure which is a Borel probability measure on the space $C_{[0,1]}$ of continuous real valued functions on the unit interval, and by $H^1$ the classical Cameron-Martin space; $\mu_{W}$ (resp. $H^1$) coincides with $\mu_{\mathbb{T}}$ (resp. with $H_{\mathbb{T}}$) when we take $\mathbb{T}=[0,1]$. 
From Proposition~\ref{JDEF} if $u\in L^2_a(\mu_{\mathbb{T}}, H_{\mathbb{T}})$, by setting $\widetilde{u}(\omega) := J|_{H_1}^\star(u(\omega))$, $\mu_{\mathbb{T}}-$ a.e.,  the isometric embedding of $H_{\mathbb{T}}$ into $H^1$  entails $E_{\mu_{\mathbb{T}}}[\|\widetilde{u}\|_{H^1}^2] < +\infty$. Moreover, from the form of $J|_{H_1}^\star$, which is identified in the proof of  Proposition~\ref{JDEF}, and from the definition of $L^2_a
(\mu_{\mathbb{T}}, H_{\mathbb{T}})$, given $t\in [0,1]$, we first obtain that $\widetilde{u}_t$ is $\mathcal{F}_t^{\mu_{\mathbb{T}}}-$ measurable if $t\in \mathbb{T}$, while it is $\mathcal{F}_{\bar
{\rho}(t)}^{\mu_{\mathbb{T}}}-$ measurable if $t\in [0,1]\setminus \mathbb{T}$, where we have set $\widetilde{u}_t:=\widetilde{W}_t\circ \widetilde{u}$, and where $\widetilde{W}_t : \widetilde{\omega} \in C_{[0,1]} \to \widetilde{\omega}(t) \in \mathbb{R}$ denotes here the evaluation map of $C_{[0,1]}$ at $t\in [0,1]$; recall that $\widetilde{u}$ takes its values in $H^1\subset C_{[0,1]}$. As a consequence, for all $t\in [0,1]$, $\widetilde{u}_t\circ J$ is $\widetilde{\mathcal{F}}_t^{\mu_W}-$ measurable, where we also used that,  from the definition of $J$, it follows that $J^{-1}
(\mathcal{F}_{s}^{\mu_{\mathbb{T}}})\subset \widetilde{\mathcal{F}}_s^{\mu_W}$, if $s\in \mathbb{T}$, and $J^{-1}(\mathcal{F}_{\bar{\rho}(s)}^{\mu_{\mathbb{T}}})\subset \widetilde{\mathcal{F}}_{s}^{\mu_W}$, if $s\in [0,1]\setminus 
\mathbb{T}$, where $\widetilde{\mathcal{F}}_t^{\mu_W}$ denotes the $\sigma-$ field on $C_{[0,1]}$ which is the $\mu_W$ completion of  $\widetilde{\mathcal{F}}_t:= \sigma(\widetilde{W}_s, s\in [0,t])$, $\forall t\in [0,1]$, while $\mu_{\mathbb{T}}= J_{\star}\mu_W$. Therefore, $\widetilde{u}\circ J \in L^2_a(\mu_W, H^1)$, which according to the previous notation denotes the set of equivalence classes of Borel measurable functions $h : C_{[0,1]}\to H^1$ such that $\int_{C_{[0,1]}} \| h(\widetilde{\omega})\|^2_{H^1} \mu_W(d\widetilde{\omega}) <+\infty$, which are identified when they coincide outside a $\mu_W$ negligible set, and are moreover such that $h_t:= \widetilde{W}_t\circ h$ is $\mathcal{F}_t^{\mu_W}-$ measurable, $\forall t\in [0,1]$.  Furthermore, from the construction of $\widetilde{u}$, the proof of~Proposition~\ref{JDEF} ensures that \begin
{equation} u\circ J = J \circ \widetilde{u}\circ J, \ \mu_W-a.s. \label{isomcomposeLM} \end{equation}  Hence, since $\mu_{\mathbb{T}}=J_{\star}\mu_W$,  from the linearity of $J$ and from~(\ref{isomcomposeLM}), we obtain
\begin{eqnarray*}
(I_{C_{\mathbb{T}}}+ u)_{\star}\mu_{\mathbb{T}}  & = & (I_{C_{\mathbb{T}}} + u)_{\star}J_\star \mu_{W} \\ & = & (J+ u\circ J)_{\star}\mu_W \\ & = & (I_{C_{\mathbb{T}}} \circ J+  J \circ \widetilde{u}\circ J)_{\star}\mu_W \\ & = & (J\circ I_{C_{[0,1]}} + J \circ \widetilde{u}\circ J)_{\star}\mu_W  \\ & = & J_\star ((I_{C_{[0,1]}} + \widetilde{u}\circ J)_\star\mu_W). \end{eqnarray*} Since $ \widetilde{u}\circ J \in L^2_a(\mu_W, H^1)$,  Theorem 1 of \cite{KailathZakai71} ensures that $ \nu:=(I_{C_{[0,1]}} + \widetilde{u}\circ J)_\star\mu_W<<\mu_W$, i.e. absolutely continuous. Hence, we conclude that $$(I_{C_{\mathbb{T}}}+ u)_{\star}\mu_{\mathbb{T}} = J_\star \nu << J_{\star} \mu_W = \mu_{\mathbb{T}},$$ which is the first part of the result. Similarly, if we further assume that $u\in L_a^\infty(\mu_{\mathbb{T}},$ $H_{\mathbb{T}})$, since $J_{\star}\mu_W=\mu_{\mathbb{T}}$, from the isometric property of Proposition~\ref{JDEF}, there exists a $c>0$ such that $$\|\widetilde{u}\circ J\|_{H^1} <c, \ \mu_W-a.s..$$ Therefore, the Novikov condition (see \cite{I-W}, \cite{NOVIKOV}) ensures here that  the Cameron-Martin-Maruyama-Girsanov theorem applies (see \cite{GIRSANOV}, \cite{I-W}, \cite{Maruyama55}),
and from this we obtain $$(I_{C_{[0,1]}}+ \widetilde{u}\circ J)_{\star}\mu_{W} \sim \mu_W;$$ for instance see  \cite{KailathZakai71}, or \cite{USTUNELBOOKZ}. Therefore, we conclude that $$(I_{C_{\mathbb{T}}}+ u)_{\star}\mu_{\mathbb{T}} =  J_\star ((I_{C_{[0,1]}} + \widetilde{u}\circ J)_\star\mu_W) \sim J_{\star} \mu_W =\mu_{\mathbb{T}}.$$  
\end{proof}

\section{A sufficient condition of existence from the Frank H. Clarke contraction criteria}
\label{SCC}

In the following theorem, given a measurable function $X : C_{\mathbb{T}} \to C_{\mathbb{T}}$, and $\nu\in M_1(C_{\mathbb{T}})$, we say that $X$ is a generating sampled Brownian motion on 
 $(C_{\mathbb{T}}, $ $\mathcal{B}(C_{\mathbb{T}})^\nu$ $,$ $\nu)$, if and only if, $(X_t)_{t\in \mathbb{T}}$ is a sampled Brownian motion on the probability space $(C_{\mathbb{T}}, \mathcal{B}(C_{\mathbb{T}})^\nu,$ $ \nu)$, and $$\mathcal{G}_t^X = \mathcal{F}_t^\nu, \ \forall t\in \mathbb{T},$$ where $\mathcal{F}_t^{\nu}:=\sigma(W_s, s\in [0,t]_{\mathbb{T}})^\nu$, where $\mathcal{G}_t^{X}:=\sigma(X_s,  s\in [0,t]_{\mathbb{T}})^\nu$, where $X_t:= W_t\circ X$, $\forall t\in \mathbb{T}$, and where $W_t : \omega\in C_{\mathbb{T}}\to \omega(t)\in \mathbb{R}$ still denotes the evaluation function at any $t\in \mathbb{T}$. Notice that whenever $\mathbb{T}=[0,1]$, we fall into a classical generating Brownian motion. Given $X:= I_{C_{\mathbb{T}}}+\theta$, where $\theta\in L^\infty_a(\mu_{\mathbb{T}},H_{\mathbb{T}})$ and $h\in L^2_a(\mu_{\mathbb{T}},H_{\mathbb{T}})$, Lemma~\ref{Lemmabs} ensures that  $$\kappa_X[h] :=  I_{C_{\mathbb{T}}}- X\circ \tau_h= - h-\theta\circ(I_{C_{\mathbb{T}}}+h)$$ is a well defined element of $L^2(\mu_{\mathbb{T}},H_{\mathbb{T}})$, which is easily checked to be also an element of the closed linear subspace $L^2_a(\mu_{\mathbb{T}},H_{\mathbb{T}})$. Therefore, under those hypothesis, we obtain a possibly nonlinear operator \begin{equation} \kappa_{X} : h \in L^2_a(\mu_{\mathbb{T}}, H_{\mathbb{T}}) \to  I_{C_{\mathbb{T}} }-X\circ \tau_h \in L^2_a(\mu_{\mathbb{T}}, H_{\mathbb{T}}), \label{kappadef} \end{equation} where $\tau_h := I_{C_{\mathbb{T}}}+ h$, $\forall h\in L^2_a(\mu_{\mathbb{T}},H_{\mathbb{T}})$. Subsequently, the operator $\kappa_X$ and the set $L^{\infty}_a$ $(\mu_{\mathbb{T}},$ $H_{\mathbb{T}})$ enlighten the notation in the statement of Theorem~\ref{theorem2}.  The proof of the latter is built on an application of the fixed point theorem of  F. H. Clarke, which extends the fundamental Banach contraction result, and allows us to obtain, under fairly general sufficient conditions, the existence of a Borel probability measure $\nu\in M_1(C_{\mathbb{T}})$, which turns a process $(X_t)_{t\in \mathbb{T}}$, which is defined on the canonical space, into a generating sampled Brownian motion.

\begin{theorem} \label{theorem2}
Let  $X:= I_{C_{\mathbb{T}}} + \theta$, where $\theta\in L^\infty_a(\mu_{\mathbb{T}}, H_{\mathbb{T}})$.  Further denote by $\kappa_X$ the map which is associated to $X$ by~(\ref{kappadef}). If we further assume that $X$ meets the following hypothesis 
\begin{enumerate}[(i)]
\item the operator $\kappa_X$ is continuous.
\item  $\exists K\in [0,1[$ such that $\forall h\in L^2_a(\mu_{\mathbb{T}}, H_{\mathbb{T}})$, $\exists \epsilon  \in ]0,1]$ which satisfy  $$\int_{C_{\mathbb{T}} } \|X\circ \tau_{h+ \epsilon \kappa_X[h]}- X\circ \tau_h - \epsilon \kappa_X[h]\|_{H_{\mathbb{T}}}^2 d\mu_\Delta  \leq K \epsilon^2 \int_{C_{\mathbb{T}}} \|\kappa_X[h]\|_{H_{\mathbb{T}}}^2 d\mu_{\Delta}.$$
\end{enumerate}
Then, there exists a Borel probability measure $\nu \in M_1(C_{\mathbb{T}})$, equivalent to $\mu_{\mathbb{T}}$, $\nu \sim \mu_{\mathbb{T}}$, which is such that $(X_t)_{t\in \mathbb{T}}$ is a generating sampled Brownian motion on $(C_{\mathbb{T}},$ $\mathcal{B}(C_{\mathbb{T}})^\nu,$ $\nu)$. In particular, we have the coincidence of  $\sigma$-fields \begin{equation}  \mathcal{G}_t^X  = \mathcal{F}_t^\nu, \ \forall t\in \mathbb{T}. \label{coinceq1}  \end{equation}
\end{theorem}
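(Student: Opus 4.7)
The plan is to apply the Frank H. Clarke contraction theorem directly to the nonlinear operator
$\kappa_X : L^2_a(\mu_{\mathbb{T}}, H_{\mathbb{T}}) \to L^2_a(\mu_{\mathbb{T}}, H_{\mathbb{T}})$, and then to convert the resulting zero into the desired Borel probability measure $\nu$. First I would rewrite the integral hypothesis~(ii) in the standard form of Clarke's criterion. Using the identity $X \circ \tau_h = I_{C_{\mathbb{T}}} - \kappa_X[h]$ built into the definition~(\ref{kappadef}) and setting $v := -\epsilon\,\kappa_X[h]$, so that $h + \epsilon\,\kappa_X[h] = h - v$ and $\|v\|_{L^2(\mu_{\mathbb{T}}, H_{\mathbb{T}})} \leq \|\kappa_X[h]\|_{L^2(\mu_{\mathbb{T}}, H_{\mathbb{T}})}$, condition~(ii) is equivalent to: for every $h \in L^2_a(\mu_{\mathbb{T}}, H_{\mathbb{T}})$ there exists $v \in L^2_a(\mu_{\mathbb{T}}, H_{\mathbb{T}})$ with $\|v\| \leq \|\kappa_X[h]\|$ satisfying
\[
\bigl\|\kappa_X[h-v] - \kappa_X[h] + v\bigr\|_{L^2(\mu_{\mathbb{T}}, H_{\mathbb{T}})}
\leq \sqrt{K}\,\|v\|_{L^2(\mu_{\mathbb{T}}, H_{\mathbb{T}})},
\]
with $\sqrt{K} < 1$. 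Together with the continuity hypothesis~(i) and the fact that $L^2_a(\mu_{\mathbb{T}}, H_{\mathbb{T}})$ is a Hilbert space (hence complete), this is exactly the hypothesis of the Clarke fixed point theorem as recalled in \cite{EKELAND2}. Applying it produces a zero $h^\star \in L^2_a(\mu_{\mathbb{T}}, H_{\mathbb{T}})$ of $\kappa_X$, which by~(\ref{kappadef}) reads $h^\star = -\,\theta \circ \tau_{h^\star}$, equivalently $X \circ \tau_{h^\star} = I_{C_{\mathbb{T}}}$, $\mu_{\mathbb{T}}$-a.s.

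I would then upgrade $h^\star$ from $L^2_a$ to $L^\infty_a(\mu_{\mathbb{T}}, H_{\mathbb{T}})$, which is what is needed to invoke the \emph{equivalence} statement of Lemma~\ref{Lemmabs}. Since $\theta \in L^\infty_a(\mu_{\mathbb{T}}, H_{\mathbb{T}})$, there is $c > 0$ with $\|\theta\|_{H_{\mathbb{T}}} < c$, $\mu_{\mathbb{T}}$-a.s. The absolute continuity part of Lemma~\ref{Lemmabs} applied to $h^\star \in L^2_a$ already gives $\nu := (\tau_{h^\star})_{\star}\mu_{\mathbb{T}} \ll \mu_{\mathbb{T}}$, so the inequality $\|\theta\|_{H_{\mathbb{T}}} < c$ holds $\nu$-a.s. as well. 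The pointwise relation $h^\star = -\,\theta \circ \tau_{h^\star}$ then yields $\|h^\star\|_{H_{\mathbb{T}}} < c$, $\mu_{\mathbb{T}}$-a.s., so $h^\star \in L^\infty_a(\mu_{\mathbb{T}}, H_{\mathbb{T}})$, and Lemma~\ref{Lemmabs} improves to $\nu \sim \mu_{\mathbb{T}}$. Pushing forward by $\tau_{h^\star}$ and using $X \circ \tau_{h^\star} = I_{C_{\mathbb{T}}}$ gives $X_{\star}\nu = (X \circ \tau_{h^\star})_{\star}\mu_{\mathbb{T}} = \mu_{\mathbb{T}}$, which provides the finite-dimensional Gaussian distributions of a sampled Brownian motion under $\nu$.

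The main technical step, and what I expect to be the principal obstacle, is the conversion of the \emph{right-sided} identity $X \circ \tau_{h^\star} = I_{C_{\mathbb{T}}}$ (valid $\mu_{\mathbb{T}}$-a.s.) into the \emph{left-sided} identity $\tau_{h^\star} \circ X = I_{C_{\mathbb{T}}}$ (valid $\nu$-a.s.), which is needed for the nontrivial inclusion $\mathcal{F}_t^\nu \subset \mathcal{G}_t^X$. For this I would compose the right-sided identity on the right by $\tau_{h^\star}$ itself to obtain $\tau_{h^\star} \circ X \circ \tau_{h^\star} = \tau_{h^\star}$, $\mu_{\mathbb{T}}$-a.s. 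Setting $G := \tau_{h^\star} \circ X - I_{C_{\mathbb{T}}}$, this becomes $G \circ \tau_{h^\star} = 0$, $\mu_{\mathbb{T}}$-a.s., i.e.
\[
1 \;=\; \mu_{\mathbb{T}}\bigl(\tau_{h^\star}^{-1}(\{G = 0\})\bigr) \;=\; \nu\bigl(\{G = 0\}\bigr),
\]
by the very definition of the pushforward $\nu$. Consequently $\tau_{h^\star} \circ X = I_{C_{\mathbb{T}}}$, $\nu$-a.s., which coordinatewise reads $W_s = X_s + h^\star_s \circ X$, $\nu$-a.s., for every $s \in \mathbb{T}$.

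Finally, I would read off the filtration coincidence~(\ref{coinceq1}). The inclusion $\mathcal{G}_t^X \subset \mathcal{F}_t^\nu$ is immediate from $X_s = W_s + \theta_s$ together with the adaptedness built into the definition of $L^\infty_a(\mu_{\mathbb{T}}, H_{\mathbb{T}})$ and the equality of completions $\mathcal{F}_s^{\mu_{\mathbb{T}}} = \mathcal{F}_s^\nu$ granted by $\nu \sim \mu_{\mathbb{T}}$. For the reverse inclusion, for $s \in [0,t]_{\mathbb{T}}$, $h^\star_s$ admits an $\mathcal{F}_s$-measurable (uncompleted) representative by the definition of $L^2_a(\mu_{\mathbb{T}}, H_{\mathbb{T}})$, and $X_{\star}\nu = \mu_{\mathbb{T}}$ pulls $\mu_{\mathbb{T}}$-null modifications of this representative back to $\nu$-null modifications of $h^\star_s \circ X$; thus $h^\star_s \circ X$ is $\mathcal{G}_s^X$-measurable up to $\nu$-null sets. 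The identity $W_s = X_s + h^\star_s \circ X$ then shows $W_s$ is $\mathcal{G}_t^X$-measurable for every $s \in [0,t]_{\mathbb{T}}$, proving $\mathcal{F}_t^\nu \subset \mathcal{G}_t^X$. Combined with $X_{\star}\nu = \mu_{\mathbb{T}}$, this gives that $(X_t)_{t \in \mathbb{T}}$ is a generating sampled Brownian motion on $(C_{\mathbb{T}}, \mathcal{B}(C_{\mathbb{T}})^\nu, \nu)$.
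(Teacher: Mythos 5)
Your proposal is correct and follows essentially the same route as the paper's proof: hypothesis (ii) is read as Clarke's directional-contraction criterion for $\phi:=I_{L^2_a}+\kappa_X$ on the complete space $L^2_a(\mu_{\mathbb{T}},H_{\mathbb{T}})$, the resulting zero $h^\star$ of $\kappa_X$ is upgraded to $L^\infty_a(\mu_{\mathbb{T}},H_{\mathbb{T}})$ via the absolute-continuity half of Lemma~\ref{Lemmabs} so that $\nu:=({\tau_{h^\star}})_\star\mu_{\mathbb{T}}\sim\mu_{\mathbb{T}}$, and the two-sided inverse identities together with adaptedness give~(\ref{coinceq1}); your explicit pushforward argument for $\tau_{h^\star}\circ X=I_{C_{\mathbb{T}}}$, $\nu$-a.s., usefully spells out what the paper dismisses as routine. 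Only two cosmetic slips: the integrand in (ii) equals $\|\kappa_X[h]-\kappa_X[h-v]+v\|_{H_{\mathbb{T}}}$, i.e.\ $\|\phi(h^\epsilon)-\phi(h)\|_{H_{\mathbb{T}}}$ with $h^\epsilon=(1-\epsilon)h+\epsilon\phi(h)$, rather than $\|\kappa_X[h-v]-\kappa_X[h]+v\|_{H_{\mathbb{T}}}$, and the identity $\tau_{h^\star}\circ X\circ\tau_{h^\star}=\tau_{h^\star}$ is obtained by composing on the \emph{left}, not the right, with $\tau_{h^\star}$ --- neither affects the argument.
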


\begin{proof}
Define the possibly nonlinear operator $$\phi : L^2_a(\mu_{\mathbb{T}}, H_{\mathbb{T}})\to L^2_a(\mu_{\mathbb{T}},H_{\mathbb{T}})$$ by  $$\phi := I_{L^2_a(\mu_{\mathbb{T}}, H_{\mathbb{T}})} +\kappa_X,$$ and 
first notice that,  if $h\in L^2_a( \mu_{\mathbb{T}}, H_{\mathbb{T}})$ and $\epsilon$ is any real number associated to $h$ by $(ii)$, then $h^\epsilon := \epsilon \phi(h) + (1-\epsilon) h$ satisfies $$h^\epsilon - h = \epsilon \kappa_X[h],$$ so that from hypothesis $(ii)$ we obtain $$\|\phi(h^\epsilon) - \phi(h)\|_{L^2_a(\mu_{\mathbb{T}},H_{\mathbb{T}})}\leq \sqrt{K} \|h^\epsilon -h\|_{L^2_a(\mu_{\mathbb{T}},H_{\mathbb{T}})},$$ where $\sqrt{K}\in [0,1[$.  
Moreover, since $\kappa_X$ is continuous from $(i)$, $\phi$ is continuous as well. Therefore, the Clarke fixed point theorem (\cite{CLARKE1}) ensures that $\phi$ admits a fixed point $u\in 
L^2_a(\mu_{\mathbb{T}},H_{\mathbb{T}})$, such that $\phi(u)=u$. From the definition of $\phi$,  $u$ satisfies $$u = -\theta \circ \tau_{u}, \ \mu_{\mathbb{T}}-a.s.$$ which turns out to be  equivalent 
to \begin{equation} X\circ \tau_{u} =I_{C_{\mathbb{T}}}, \ \mu_{\mathbb{T}}-a.s.; \label{tcU1a}  \end{equation} in the previous equation, Lemma~\ref{Lemmabs} still ensures that the pullback 
is well defined. Define $$\nu:= {\tau_u}_\star \mu_{\mathbb{T}},$$ which satisfies $\nu \sim \mu_{\mathbb{T}}$.  Indeed, since  $\theta\in L^\infty_a(\mu_{\mathbb{T}},H_{\mathbb{T}})$ and ${\tau_{u}}_{\star}\mu_{\mathbb{T}}<<\mu_{\mathbb{T}}$, which we know from Lemma~\ref{Lemmabs} applied to $u\in L^2_a(\mu_{\mathbb{T}},H_{\mathbb{T}})$, we first obtain $\theta\circ \tau_{u}\in L^{\infty}_a(\mu_{\mathbb{T}}, H_{\mathbb{T}})$, so that $u\in L^\infty_a(\mu_{\mathbb{T}},H_{\mathbb{T}})$. Therefore  $\nu:={\tau_{u}}_{\star}\mu_{\mathbb{T}}\sim\mu_{\Delta}$ follows again from  Lemma~\ref{Lemmabs} now applied to $u\in L^\infty_a(\mu_{\mathbb{T}},H_{\mathbb{T}})$. Then, it is routine to check 
that \begin{equation} \label{uct1}  \tau_{u} \circ X = I_{C_{\mathbb{T}}}, \ \nu-a.s.,\end{equation}and therefore $\mu_{\mathbb{T}}-a.s.$, taking into account that $\nu \sim \mu_{\mathbb{T}}$.
Furthermore, given $t\in \mathbb{T}$, since $u\in L^{\infty}_a(\mu_{\mathbb{T}}, H_{\mathbb{T}})$, whenever $s\in [0,t]_{\mathbb{T}}$,  we have that $W_s + u_s$ is $\mathcal{F}_s^{\mu_{\mathbb{T}}}-$ measurable and therefore $\mathcal{F}_t^{\mu_{\mathbb{T}}}-$ measurable, which yields $$\tau_{u}^{-1}(A) \in \mathcal{F}_t^{\mu_{\mathbb{T}}}, \ \forall A\in \mathcal{F}_t,$$ where $\mathcal{F}_t=\sigma(W_s :s\in [0,t]_{\mathbb{T}})$.  Hence, whenever $A\in \mathcal{F}_t$,~(\ref{uct1}) yields 
$A\in \mathcal{G}_t^X$, so that we obtain  $$\mathcal{F}_t^{\nu}\subset \mathcal{G}_t^{X}, \ \forall t\in \mathbb{T},$$  while at the inverse the hypothesis $\theta\in L^{\infty}_a(\mu_{\mathbb{T}},H_{\mathbb{T}})$ yields $$\mathcal{G}_t^X\subset \mathcal{F}_t^{\nu},$$ from which~(\ref{coinceq1}) follows. On the other hand, from the definition of $\nu$,~(\ref{tcU1a}) yields $X_\star \nu = \mu_{\mathbb{T}}$, and therefore $(X_t)_{t\in \mathbb{T}}$ is a sampled Brownian motion which generates the canonical filtration. Notice that from the notation, $\mathcal{G}_t^X= \sigma(X_s,  s\in [0,t]_{\mathbb{T}})^\nu$, $\forall t\in \mathbb{T}$, and that with a symmetric proof as~(\ref{coinceq1}), $\mathcal{F}_t^{\mu_{\mathbb{T}}}= \sigma(W_s + u_s : s\in [0,t]_{\mathbb{T}})^{\mu_{\mathbb{T}}}$ also holds similarly, $\forall t\in \mathbb{T}$.
\end{proof}

\section{Applications to stochastic differential equations driven by a sampled Brownian motion.} \label{Fortetsection}

    In the pioneering work \cite{Fortet2} notably on absolutely continuous transformations of laws of  stochastic processes, within his investigations in the first half of the XXth century, and before the celebrated paper of K. Itô (\cite{ito44}), the french mathematician Robert Fortet proposed a fixed point method to handle a specific equation with a noise term, where appears a \textit{symbolic reciprocal formula} (see \cite{Fortet2} p.199).  Somehow astonishingly, this method which is expected to be much natural  within an analytic point of view is quite distinct from the nowadays classical It\^{o}'s fixed point method for stochastic differential equations (\cite{ITO3}). However, it turns out that Fortet's approach is much suitable for our purposes. Although to be made fully rigorous within the much refined framework of It\^{o}'s stochastic differential equations  for a large class of drift terms including non-markovians, Robert Fortet's intuition requires rather elaborated tools of transport of measure as isomorphisms mod 0  of Rokhlin  (\cite{Rohlin}), or even much specific pullbacks of morphisms of probability spaces, which require an accurate control of the absolute continuity of the deterministic transport plans involved, the intuition behind the Fortet symbolic formula
can be readily observed even within the elaborated time scales framework of this paper. Consider the stochastic differential equation driven by a sampled Brownian motion $(B_t)_{t\in \mathbb{T}}$,   \begin{equation} X_t = B_t +  \int_{[0,t)_{\mathbb{T}}} \beta_s(X) \lambda_{\Delta}(ds), \ \forall t\in \mathbb{T},  \label{SDEdef1} \end{equation}  where  $$\beta : (t,\omega) \in \mathbb{T}\times C_{\mathbb{T}} \to \beta_t(\omega) \in \mathbb{R},$$ denotes a $\mathcal{B}(\mathbb{T})\otimes \mathcal{B}(C_{\mathbb{T}})/ \mathcal{B}(\mathbb{R})-$ measurable function such that $\beta_t : \omega\in C_{\mathbb{T}} \to \mathbb{R}$ is $\sigma(W_s : s\in [0,t]_{\mathbb{T}})-$ measurable, $\forall t\in \mathbb{T}$. The precise definition we use within this specific context will be clear from the statement of the next corollary. For pairs  $(X,B)$ which satisfy~(\ref{SDEdef1}),  defining $$T: \omega\in C_{\mathbb{T}} \to \omega - \int_{[0,.)_{\mathbb{T}}}\beta_s(\omega) \lambda_{\Delta}(ds)   \in C_{\mathbb{T}},$$ equation~(\ref{SDEdef1}) now reads informally $$T(X)=B,$$ where $B$ and $X$ are now interpreted as random functions. Hence, if $F$ denotes a suitable reciprocal of $T$, and $(B_t)_{t\in \mathbb{T}}$ an $(\mathcal{A}_t)_{t\in \mathbb{T}}-$ sampled Brownian motion on a complete probability space $(\Omega, \mathcal{A},\mathcal{P})$,   one may hope that, under conditions, a formula of the form $X=F(B)$ should produce solutions $(X_{t})_{t\in\mathbb{T}}$ to~(\ref{SDEdef1}) from $F$ and $(B_t)_{t\in\mathbb{T}}$. For the sake of clarity, henceforth we focus on the case where $\beta$ is both bounded and adapted, as stated accurately below.  Within a practical point of view, a key point to apply rigorously within our stochastic framework  this method, which is familiar in analytic contexts,  is to obtain at some point a result equivalent to the coincidence of $\sigma-$ fields \begin{equation} \mathcal{G}_t^T =\mathcal{F}_t^\nu, \forall t\in \mathbb{T}, \label{emofi} \end{equation} where $\mathcal{G}_t^T:= (T^{-1}(\mathcal{F}_t))^\nu$, $\forall t\in \mathbb{T}$, for some $\nu\in M_1(C_{\mathbb{T}})$. One way to address the problem by methods of functional nonlinear analysis is to apply a fixed point theorem pointwise, at each $\omega\in C_{\mathbb{T}}$ possibly outside a $\mu_{\mathbb{T}}-$ negligible set (for instance see \cite{FUZ} when $\mathbb{T}=[0,1]$). As it may be seen from \cite{USTUZAKAIINV1}, thanks to the so-called Souslin-Lusin theorem and by using analytic sets (\cite{DM}), under suitable hypothesis, at first such approaches essentially  yield results equivalent to the condition $\mathcal{G}_1^T= \mathcal{F}_1^{\nu}$ from a fixed point theorem, for some $\nu\in M_1(C_{\mathbb{T}})$; therefore, it can be used to obtain the existence of extremal solutions to the corresponding Yershov problem which was recalled in the introduction. However, to obtain rigorously the full~(\ref{emofi}), using the pointwise approach, one may either localize the condition, or obtain results equivalent to the uniqueness of solutions to a corresponding Yershov problem among some specific probability measures absolutely continuous with respect to $\mu_{\mathbb{T}}$; see also the references recalled in the introduction.  By contrast here, we follow a global approach, where  the fixed point method is applied directly in $L^2_a(\mu_{\mathbb{T}}, H)$, and from this, we obtain~(\ref{emofi}) without any further conditioning nor localization. Those aspects are reflected in our hypothesis, which provide sufficient conditions under integral forms, which furthermore do not require to know the laws of solutions \textit{a priori}, in contrast notably with several pathwise hypothesis (for instance see \cite{FUZ}, \cite{USTUZAKAIINV1}).

\begin{corollary} \label{sdecorollary}
Let $\beta :  \mathbb{T} \times C_{\mathbb{T}} \to \mathbb{R}$ be a bounded Borel measurable function such that $\forall t\in \mathbb{T}$, $\beta_t : \omega \in C_{\mathbb{T}}\to \beta_t(\omega)\in \mathbb{R}$ is $\mathcal{F}_t-$ measurable, where $\mathcal{F}_t:=\sigma(W_s, s \in [0,t]_{\mathbb{T}})$, $\forall t\in \mathbb{T}$.  Further assume that 

\begin{enumerate}[(i)] 
\item the function  $$F: h\in L^2_a(\mu_{\mathbb{T}}, H_{\mathbb{T}}) \to \int_{[0,.)_{\mathbb{T}}} \beta_s\circ \tau_h \lambda_{\Delta}(ds) \in L^2_a(\mu_{\mathbb{T}}, H_{\mathbb{T}}),$$ is continuous.
\item  $\exists K\in [0,1[ $ such that $\forall h\in \mathcal{L}^2_a(\mu_{\mathbb{T}},H_{\mathbb{T}})$, $\exists \epsilon \in ]0,1]$ which satisfy

 $$I(h) \leq K\epsilon ^2 \int_{C_{\mathbb{T}}} \left(\int_{\mathbb{T}} \left( h^\Delta_s(\omega) - \beta_s(\omega +h(\omega)) \right)^2 \lambda_{\Delta}(ds)\right) \mu_{\mathbb{T}}(d\omega),$$ 
where $I(h)$ denotes the integral
$$I(h):=\int_{C_{\mathbb{T}}} \left( \int_{\mathbb{T}} \left(\beta_s(\omega + h^\epsilon(\omega)) -\beta_s(\omega +h(\omega))\right)^2 \lambda_{\Delta}(ds) \right)\mu_{\mathbb{T}}{(d\omega)},$$
and where $$h^\epsilon_s(\omega) :=(1-\epsilon) h_s(\omega) +\epsilon \int_{[0,s)_{\mathbb{T}}} \beta_u(\omega+h(\omega)) \lambda_{\Delta}(du), \ \forall s\in \mathbb{T}, \ \mu_{\mathbb{T}}-a.s..$$
\end{enumerate}
Then, given an $(\mathcal{A}_t)_{t\in \mathbb{T}}-$ sampled Brownian motion $(B_t)_{t\in \mathbb{T}}$ which is defined on a complete filtered probability space $(\Omega, 
\mathcal{A}, (\mathcal{A}_t)_{t\in \mathbb{T}},\mathcal{P})$, there exists an $(\mathcal{A}_t)-$ adapted stochastic process $(X_t)_{t\in \mathbb{T}}$ defined on $(\Omega, \mathcal
{A},\mathcal{P})$ such that  \begin{equation} X_t = B_t + \int_{[0,t)_{\mathbb{T}}} \beta_s(X) \lambda_{\Delta}(ds), \ \forall t\in \mathbb{T}, \ \mathcal{P}-p.s.. \label{SDE1} \end
{equation} Furthermore, $(X_t)_{t\in \mathbb{T}}$ is a strong solution of~(\ref{SDE1}) for $(B_t)_{t\in \mathbb{T}}$, that is, there exists a Borel measurable function $F : C_{\mathbb
{T}} \to C_{\mathbb{T}}$, such that $F^{-1}(\mathcal{F}_t)\subset \mathcal{F}_t^{\mu_{\mathbb{T}}}$, $\forall t\in \mathbb{T}$ and $$X_t= F_t(B), \ \forall t\in \mathbb{T}, \ \mathcal{P}-
p.s..$$  In particular $(B_t)_{t\in \mathbb{T}}$ and $(X_t)_{t\in \mathbb{T}}$ also satisfy the following  
coincidence of filtrations  \begin{equation} \label{filtrequalokart} (\mathcal{G}_t^X)_{t\in \mathbb{T}}=(\mathcal{G}_t^B)_{t\in \mathbb{T}},\end{equation} where $\mathcal{G}_t^{X}:= \sigma(X_s, s\in [0,t]_{\mathbb{T}})^{\mathbb{P}}$ and $\mathcal{G}_t^{B}:= \sigma(B_s, s\in [0,t]_{\mathbb{T}})^{\mathbb{P}}$, $\forall t\in \mathbb{T}$.  Moreover,   the law $p_X:=X_{\star}\mathcal{P}$ of $(X_t)_{t\in \mathbb{T}}$ on $C_{\mathbb{T}}$, as a random $C_{\mathbb{T}}$ valued function $X:\Omega \to C_{\mathbb{T}}$, is equivalent to the 
sampled Wiener measure $\mu_{\mathbb{T}}= B_{\star}\mathcal{P}$, which is the law of the sampled Brownian motion $(B_t)_{t\in\mathbb{T}}$, as a $C_{\mathbb{T}}-$ valued random function $B : \Omega \to C_{\mathbb{T}}$.
\end{corollary}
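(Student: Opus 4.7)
The plan is to reduce this corollary to a direct application of Theorem~\ref{theorem2}, combined with a pullback by the given sampled Brownian motion $(B_t)_{t\in\mathbb{T}}$ which realizes the Fortet symbolic reciprocal formula rigorously in our context. First, define on the canonical space
$$\theta : \omega \in C_{\mathbb{T}} \mapsto -\int_{[0,.)_{\mathbb{T}}} \beta_s(\omega)\,\lambda_{\Delta}(ds) \in C_{\mathbb{T}},$$
so that $\theta^{\Delta} = -\beta$. Since $\beta$ is bounded, adapted, and $\lambda_{\Delta}(\mathbb{T})<+\infty$, I would check routinely that $\theta \in L^\infty_a(\mu_{\mathbb{T}}, H_{\mathbb{T}})$, using the adaptedness (i)-(ii) of Section~\ref{P}: condition (i) follows from $(\mathcal{F}_t)$-adaptedness of $\beta$ and integration over $[0,t)_{\mathbb{T}}$; condition (ii) follows since $\theta^{\Delta}_t = -\beta_t$ at right-scattered points, which are $\mathcal{F}_t$-measurable. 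Then set $T := I_{C_{\mathbb{T}}} + \theta$.

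Next, I would identify the operator $\kappa_T$ of~(\ref{kappadef}) with the corollary's data. Direct substitution gives $\kappa_T[h](\omega) = -h(\omega) - \theta(\omega+h(\omega)) = F(h)(\omega) - h(\omega)$, where $F$ is the operator of hypothesis (i). Hence $h^\epsilon := h + \epsilon\kappa_T[h] = (1-\epsilon)h + \epsilon F(h)$, which matches the $h^\epsilon$ of hypothesis (ii). Expanding $T\circ \tau_{h^\epsilon} - T\circ \tau_h - \epsilon \kappa_T[h]$ collapses to $\theta\circ \tau_{h^\epsilon}- \theta\circ\tau_h$, and since this is an $H_{\mathbb{T}}$-function with Hilger derivative $-(\beta_s(\omega+h^\epsilon(\omega)) -\beta_s(\omega+h(\omega)))$, its $H_{\mathbb{T}}$ norm squared produces exactly the integral $I(h)$ of hypothesis (ii); similarly $\|\kappa_T[h]\|_{H_{\mathbb{T}}}^2 = \int_{\mathbb{T}}(h^\Delta_s - \beta_s(\omega+h(\omega)))^2\lambda_\Delta(ds)$ gives the right-hand side. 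Thus hypotheses (i) and (ii) of the corollary are precisely those of Theorem~\ref{theorem2} for $T$, which now applies.

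Theorem~\ref{theorem2} produces a Borel probability measure $\nu \sim \mu_{\mathbb{T}}$ and, inspecting its proof, an adapted fixed point $u \in L^\infty_a(\mu_{\mathbb{T}}, H_{\mathbb{T}})$ satisfying $T\circ\tau_u = I_{C_{\mathbb{T}}}$ and $\nu = (\tau_u)_\star \mu_{\mathbb{T}}$. The fixed-point identity $\kappa_T[u]=0$ unfolds as
$$u_t(\omega) = \int_{[0,t)_{\mathbb{T}}} \beta_s(\omega + u(\omega))\,\lambda_{\Delta}(ds), \quad \mu_{\mathbb{T}}\text{-a.s.},$$
so the Borel map $F := \tau_u = I_{C_{\mathbb{T}}} + u$ is the Fortet reciprocal of $T$, and by adaptedness of $u$ it satisfies $F^{-1}(\mathcal{F}_t) \subset \mathcal{F}_t^{\mu_{\mathbb{T}}}$ for all $t\in\mathbb{T}$. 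I then define the candidate solution by pullback on the given space: $X := F\circ B$, i.e. $X_t := W_t\circ F\circ B$. Since $B_\star \mathcal{P} = \mu_{\mathbb{T}}$, every $\mu_{\mathbb{T}}$-a.s. identity on $C_{\mathbb{T}}$ becomes a $\mathcal{P}$-a.s. identity under composition with $B$; in particular the fixed-point identity yields $X_t = B_t + \int_{[0,t)_{\mathbb{T}}} \beta_s(X)\,\lambda_{\Delta}(ds)$ $\mathcal{P}$-a.s., so~(\ref{SDE1}) holds, and $(\mathcal{A}_t)$-adaptedness of $X$ follows from adaptedness of $u$ and of $B$. The law of $X$ is $F_\star \mu_{\mathbb{T}} = (\tau_u)_\star \mu_{\mathbb{T}} = \nu \sim \mu_{\mathbb{T}}$.

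It remains to establish the filtration coincidence~(\ref{filtrequalokart}). The inclusion $\mathcal{G}_t^X \subset \mathcal{G}_t^B$ is immediate from $X_t = F_t(B)$ and $F^{-1}(\mathcal{F}_t)\subset \mathcal{F}_t^{\mu_{\mathbb{T}}}$. For the reverse inclusion, the identity $T\circ F = I_{C_{\mathbb{T}}}$ holding $\mu_{\mathbb{T}}$-a.s. pulls back to $T(X) = B$ $\mathcal{P}$-a.s., that is $B_t = X_t - \int_{[0,t)_{\mathbb{T}}} \beta_s(X)\,\lambda_{\Delta}(ds)$, exhibiting $B_t$ as $\mathcal{G}_t^X$-measurable by the $(\mathcal{F}_t)$-adaptedness of $\beta$. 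The main obstacle in carrying out this plan is not conceptual but bookkeeping: one must verify that the adaptedness condition (ii) on $\theta^\Delta$ at right-scattered points of $\mathbb{T}$ genuinely transfers to the fixed point $u$ obtained in the $L^2_a$ completion (which is granted by the closedness of $L^2_a$ stated in Section~\ref{P}), and, more delicately, that the $\mu_{\mathbb{T}}$-a.s. identities produced by Theorem~\ref{theorem2} transport without loss to $\mathcal{P}$-a.s. identities on $(\Omega, \mathcal{A}, \mathcal{P})$ via the pullback by $B$---this is where the equality $B_\star \mathcal{P} = \mu_{\mathbb{T}}$ and the absolute continuity $\nu \sim \mu_{\mathbb{T}}$ of Lemma~\ref{Lemmabs} intervene jointly.
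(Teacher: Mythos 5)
Your proposal is correct and follows essentially the same route as the paper: define $\theta:=-\int_{[0,\cdot)_{\mathbb{T}}}\beta_s\,\lambda_{\Delta}(ds)$, verify that the corollary's hypotheses translate into those of Theorem~\ref{theorem2} for $T:=I_{C_{\mathbb{T}}}+\theta$ (your explicit identification of $\kappa_T[h]=F(h)-h$ and of $h^\epsilon$ just spells out what the paper leaves as a direct substitution), then set $F:=\tau_u$ from the fixed point and define $X:=F\circ B$, transporting the $\mu_{\mathbb{T}}$-a.s.\ identities to $\mathcal{P}$-a.s.\ ones via $B_{\star}\mathcal{P}=\mu_{\mathbb{T}}$ and $\nu\sim\mu_{\mathbb{T}}$. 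The filtration coincidence argument you give (forward inclusion from adaptedness of $F$, reverse from $T(X)=B$ and adaptedness of $\beta$) is the same mechanism the paper invokes through the conditions $(\mathcal{G}_t^{T})=(\mathcal{F}_t^\nu)$ and $(\mathcal{G}_t^{F})=(\mathcal{F}_t^{\mu_{\mathbb{T}}})$ of Theorem~\ref{theorem2}.
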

\begin{proof}
Define $\theta$ to be the element of $L^2_a(\mu_{\mathbb{T}}, H_{\mathbb{T}})$ such that $$\theta := -\int_{[0,.)_{\mathbb{T}}} \beta_s \lambda_{\Delta}(ds), \ \mu_{\mathbb{T}}-a.s..$$
Then, assumption $(i)$, respectively $(ii)$, imply that assumptions $(i)$, respectively $(ii)$, of Theorem~\ref{theorem2} are satisfied by $I_{C_{\mathbb{T}}}+ \theta$. Therefore, together with its proof, the latter ensures that  denoting $T:= I_{C_{\mathbb{T}}} + \theta$,  there exists a $F := I_{C_{\mathbb{T}}}+ u$, for some $u\in L^\infty_a(\mu_{\mathbb{T}},H_{\mathbb{T}})$, which is such that $$T\circ F = F\circ T = I_{C_{\mathbb{T}}}, \ \mu_{\mathbb{T}}-a.s.,$$ where we used that $\nu:=F_{\star}\mu_{\mathbb{T}}$ is equivalent to $\mu_{\mathbb{T}}$ from Lemma~\ref{Lemmabs}. Now, let $(B_t)_{t\in \mathbb{T}}$ be an $(\mathcal{A}_t)_{t\in\mathbb{T}}-$ sampled Brownian motion which is defined on a complete probability space $(\Omega, \mathcal{A}, \mathcal{P})$, with a complete filtration $(\mathcal{A}_t)_{t\in \mathbb{T}}$. We pick a measurable map $B : \Omega \to C_{\mathbb{T}}$, such that $B_t= W_t\circ B$, $\forall t\in \mathbb{T}$, $\mathcal{P}-a.s.$ and define $$X_t:= F_t(B), \forall t\in \mathbb{T}, \mathcal{P}-a.s.,$$  and $X:= F\circ B$, $\mathcal{P}-a.s.$. Then, we obtain $$T\circ X= T\circ (F\circ B) = (T\circ F)\circ B = B, \ \mathcal{P}-a.s.,$$ where the pullbacks are well defined since $B_{\star}\mathcal{P} =\mu_{\mathbb{T}}$ and $X_{\star}\mathcal{P}=\nu$, which show that $(X,B)$ satisfy~(\ref{SDE1}). Moreover, notice that since $(B_t)_{t\in\mathbb{T}}$ is $(\mathcal{A}_t)_{t\in\mathbb{T}}-$ adapted, and since from the proof of Theorem~\ref{theorem2}, $F_t$ is $\mathcal{F}_t^{\mu_{\mathbb{T}}}-$ measurable $\forall t\in \mathbb{T}$, we obtain that $X_t$ is $\mathcal{A}_t$ measurable, $\forall t\in \mathbb{T}$; the coïncidence of $\sigma-$ fields also follows similarly from the conditions $(\mathcal{G}_t^{T})= (\mathcal{F}_t^\nu)$, and $(\mathcal{G}_t^F)= (\mathcal{F}_t^{\mu_{\mathbb{T}}})$ of Theorem~\ref{theorem2} together with its proof.  \end{proof}

\begin{corollary}\label{Corollaire2}
For any time scale $\mathbb{T}$ such that $\{0,1\}\subset \mathbb{T}\subset [0,1]$, and $a\in ]-1,1[$, the stochastic differential equation driven by an $(\mathcal{A}_t)_{t\in\mathbb{T}}-$ sampled Brownian motion $(B_t)_{t\in\mathbb{T}}$ defined on a complete filtered probability space $(\Omega,\mathcal{A},$ $(\mathcal{A}_t)_{t\in\mathbb{T}}$, $\mathcal{P})$   \begin{equation} \label{exo1M} X_t = B_t + \int_{[0,t[_{\mathbb{T}}} a \sin(X_{s}-X_{\rho(s)}) \lambda_\Delta(ds), \ \forall t\in \mathbb{T}, \ \mathcal{P}-p.s.,\end{equation}
has a strong solution $(X_t)_{t\in\mathbb{T}}$, whose law $p_X\in M_1(C_{\mathbb{T}})$ is equivalent to the sampled Wiener measure $\mu_{\mathbb{T}}$ ;  $\rho :=\bar{\rho}|_{\mathbb{T}}$ denotes the time scales backward jump operator of $\mathbb{T}$.
\end{corollary}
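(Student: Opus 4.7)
The plan is to apply Corollary~\ref{sdecorollary} to the drift
$$\beta_s(\omega) := a\sin(\omega(s)-\omega(\rho(s))), \quad s\in\mathbb{T},\ \omega\in C_\mathbb{T}.$$
This $\beta$ is bounded by $|a|<1$, Borel measurable as a composition of continuous maps, and each $\beta_t$ is $\mathcal{F}_t$-measurable since $\rho(t)\leq t$. A key structural observation, which will drive the whole estimate, is that $\beta_s(\omega)=0$ whenever $s$ is left-dense (then $\rho(s)=s$); consequently the drift is supported on the countable set $\mathbb{T}_{ls}$, and by the Lebesgue decomposition~(\ref{Hilgerdecomp}) only the atomic part $\sum_{t\in\mathbb{T}_{rs}}\bar{\mu}_g(t)\delta^{Dirac}_t$ of $\lambda_\Delta$ contributes to integrals involving $\beta$.

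For the continuity condition (i), I would take a convergent sequence $h_n\to h$ in $L^2_a(\mu_\mathbb{T},H_\mathbb{T})$. The continuous embedding $H_\mathbb{T}\hookrightarrow C_\mathbb{T}$ gives $\int_{C_\mathbb{T}}\|h_n(\omega)-h(\omega)\|_\infty^2\,\mu_\mathbb{T}(d\omega)\to 0$, hence $h_n(\omega)\to h(\omega)$ uniformly on $\mathbb{T}$ for $\mu_\mathbb{T}$-a.e.\ $\omega$ along a subsequence. The Lipschitz bound $|\sin x-\sin y|\leq|x-y|$ then yields pointwise convergence $\beta_s\circ\tau_{h_n}\to\beta_s\circ\tau_h$, and since the squared differences are uniformly dominated by the constant $4a^2$, dominated convergence applied twice gives $F(h_n)\to F(h)$ in $L^2_a(\mu_\mathbb{T},H_\mathbb{T})$.

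The crux is the contraction estimate (ii). For $s\in\mathbb{T}_{ls}$ the definition of $\rho$ forces $[\rho(s),s)_\mathbb{T}=\{\rho(s)\}$, so, setting $\Delta h:=h^\epsilon-h=\epsilon\,\kappa_X[h]$ in the notation of Theorem~\ref{theorem2}, the identity
$$(\Delta h)_s-(\Delta h)_{\rho(s)}=\int_{[\rho(s),s)_\mathbb{T}}(\Delta h)^\Delta(u)\,\lambda_\Delta(du)=\epsilon\,\bar{\mu}_g(\rho(s))\,\kappa_X[h]^\Delta(\rho(s))$$
expresses the only relevant increment as a single Hilger-derivative evaluation weighted by the graininess. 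Applying $|\sin x-\sin y|\leq|x-y|$ and restricting to $s\in\mathbb{T}_{ls}\cap\mathbb{T}_{rs}$ (by the structural observation above),
$$I(h)\leq a^2\epsilon^2\int_{C_\mathbb{T}}\sum_{s\in\mathbb{T}_{ls}\cap\mathbb{T}_{rs}}\bar{\mu}_g(s)\,\bar{\mu}_g(\rho(s))^2\,\kappa_X[h]^\Delta(\rho(s))^2\,\mu_\mathbb{T}(d\omega).$$
The reindexing $t=\rho(s)$ bijects $\mathbb{T}_{ls}\cap\mathbb{T}_{rs}$ with $S:=\{t\in\mathbb{T}_{rs}:\bar{\sigma}(t)\in\mathbb{T}_{rs}\}$, converting the summand into $\bar{\mu}_g(\bar{\sigma}(t))\,\bar{\mu}_g(t)^2\,\kappa_X[h]^\Delta(t)^2$. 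The key numerical bound is then the AM-GM estimate
$$\bar{\mu}_g(t)\,\bar{\mu}_g(\bar{\sigma}(t))\leq\frac{(\bar{\mu}_g(t)+\bar{\mu}_g(\bar{\sigma}(t)))^2}{4}=\frac{(\bar{\sigma}(\bar{\sigma}(t))-t)^2}{4}\leq\frac{1}{4},$$
using $0\leq t$ and $\bar{\sigma}(\bar{\sigma}(t))\leq 1$. Dominating the $S$-sum by the full $\lambda_\Delta$-integral then yields $I(h)\leq(a^2/4)\,\epsilon^2\int_{C_\mathbb{T}}\|\kappa_X[h]\|_{H_\mathbb{T}}^2\,\mu_\mathbb{T}(d\omega)$, so (ii) holds with $K=a^2/4\in[0,1)$ for any $\epsilon\in(0,1]$. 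The hard part is precisely this sharpening: a na\"ive splitting $|(\Delta h)_s|+|(\Delta h)_{\rho(s)}|$ would yield only $K=2a^2$ and force $|a|<1/\sqrt{2}$; it is the single-evaluation identity together with AM-GM that recovers the full range $|a|<1$ announced in the statement. Once (i) and (ii) are verified, Corollary~\ref{sdecorollary} supplies the desired strong solution $(X_t)_{t\in\mathbb{T}}$ with law equivalent to $\mu_\mathbb{T}$.
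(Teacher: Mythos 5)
Your proposal is correct, but it takes a genuinely different route from the paper on the only nontrivial point, the contraction estimate $(ii)$. The paper's own proof is a one-line application of Jensen/Cauchy--Schwarz: since $\lambda_{\Delta}(\mathbb{T})=1$, one bounds $\big|\int_{[\rho(s),s)_{\mathbb{T}}}(\Delta h)^{\Delta}\,d\lambda_{\Delta}\big|^2\leq \lambda_{\Delta}([\rho(s),s)_{\mathbb{T}})\,\|\kappa_X[h]\|_{H_{\mathbb{T}}}^2\leq\|\kappa_X[h]\|_{H_{\mathbb{T}}}^2$ and then integrates in $s$ against the probability measure $\lambda_{\Delta}$, which gives $(ii)$ with $K=a^2<1$ directly, for every $\epsilon$. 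In particular your closing claim --- that a cruder bound would force $|a|<1/\sqrt{2}$ and that the single-evaluation identity plus AM--GM is \emph{needed} to recover the full range $|a|<1$ --- is mistaken: $K=a^2$ already suffices for the stated range, so your refinement is a sharpening rather than a necessity. That said, your argument is sound and buys something real: the observation that $\beta_s$ vanishes at left-dense points, so that only the atomic part of $\lambda_{\Delta}$ in the decomposition~(\ref{Hilgerdecomp}) contributes, together with the reindexing $t=\rho(s)$ and the bound $\bar{\mu}_g(t)\bar{\mu}_g(\bar{\sigma}(t))\leq 1/4$, yields $K=a^2/4$ and hence would establish the conclusion for the strictly larger range $|a|<2$. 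Your verification of the boundedness, adaptedness and of the continuity condition $(i)$ (via the embedding $H_{\mathbb{T}}\hookrightarrow C_{\mathbb{T}}$, the Lipschitz property of $\sin$, and dominated convergence along subsequences) matches what the paper leaves as an ``easy exercise'' and is fine, modulo the standard subsequence-of-subsequences remark needed to pass from a.e.\ convergence along a subsequence to convergence of the full sequence.
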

\begin{proof}
Whenever $a\in]-1,1[$, define  $$\beta_t(\omega):= a \sin(\omega(t)- \omega(\rho(t))), \ \forall (t,\omega)\in \mathbb{T}\times C_{\mathbb{T}}.$$ Then, setting $K:=a^2$, and taking into account that $\lambda_{\Delta}(\mathbb{T})=1$, which allows to apply the Jensen inequality, it is an easy exercise to check that $\beta$ satisfies the hypothesis of Corollary~\ref{sdecorollary}. 
\end{proof}

We now provide examples with typical time scales.

\begin{example}
Within the convention on the notation of $H_{\mathbb{T}}$ adopted in this paper, the equation~(\ref{SDE1}) in Corollary~\ref{sdecorollary} boils down to  
\begin{itemize}
\item (Infinite countable time-scale)  $$X_{t_n}= B_{t_n} + \sum_{k<n } \beta_{t_k}(X)(t_{k+1}-t_k), \ \forall n\in \{0\} \cup \mathbb{Z}^{-},$$ if  $\mathbb{T}:=\{0\}\bigcup\{t_n : n \in\{0\}\cup\mathbb{Z}^{-} \}$, where $t_0=1$, and  where $(t_n)_{n\in \mathbb{Z}^-}$ denotes a  strictly decreasing sequence such that $\lim_{n\to -\infty} t_n =0$; notice that $0$ is right-dense which entails $\lambda_{\Delta}(\{0\})=0$. 
\item (Continuous time case)   $$X_t= B_t +\int_0^t \beta_s(X)\lambda(ds), \ \forall t\in [0,1],$$ if $\mathbb{T}= [0,1]$, $\lambda$ denoting the classical Lebesgue measure.
\end{itemize}
Moreover, in both cases, within models the drift term may depend on the whole past of $X$.
\end{example}

\begin{example}(Cantor sampling of brownian filtrations.)
Denote by $$K_3:= \left\{\sum_{n=1}^{+\infty}\frac{z_n}{3^n} : z_n\in\{0,2\}, \ \forall n\geq 1\right\},$$ the Cantor set, then $\mathbb{T}:=K_3$ satisfies the conditions of Corollary~\ref{Corollaire2}. Let $(B_t)_{t\in[0,1]}$ be a standard Brownian motion on a complete probability space $(\Omega, \mathcal{A},\mathcal{P})$,  then there exists a stochastic process $(X_t)_{t\in K_3}$ which satisfies $$X_t = B_t + \frac{1}{2} \int_{[0,t[_{\mathbb{T}}}  \sin(X_{s}-X_{\rho(s)}) \lambda_\Delta(ds), \ \forall t\in K_3, \ \mathcal{P}-p.s..$$ Moreover, the stochastic process  $(X_t)_{t\in K_3}$, which is defined on $(\Omega, \mathcal{A},\mathcal{P})$, generates the Cantor sampled brownian filtration $(\mathcal{G}_t^B)_{t\in K_3}$, that is~(\ref{filtrequalokart}) holds.
\end{example}

\par\bigskip\noindent

\bibliographystyle{amsplain}

\noindent

\end{document}